\theoremstyle{plain}
\newtheorem{thm}{Theorem}[section]
\newtheorem{lem}[thm]{Lemma}
\newtheorem{prop}[thm]{Proposition}
\newtheorem{claim}[thm]{Claim}
\newtheorem{qu}[thm]{Problem}
\theoremstyle{definition}
\newtheorem{defn}[thm]{Definition}
\newtheorem{fact}[thm]{Fact}
\theoremstyle{remark}
\newtheorem*{rem}{Remark}
\newcommand{\nc}{\newcommand}
\nc{\dmo}{\DeclareMathOperator}
\DeclareMathOperator{\Diff}{Diff}
\DeclareMathOperator{\Homeo}{Homeo}
\DeclareMathOperator{\DD}{\mathbb{D}}
\nc{\para}[1]{\medskip\noindent\textbf{#1.}}
\title{Non-realizability of the pure braid group as area-preserving homeomorphisms}
\author{Lei Chen}
\address{\newline Department of Mathematics   \newline California Institute of Technology   \newline Pasadena, CA 91125,  USA }
\email{chenlei@caltech.edu}
\begin{document}
 \bibliographystyle{alpha}
 
\maketitle
\begin{abstract}
Let $\Homeo_+(D^2_n)$ be the group of orientation-preserving homeomorphisms of $D^2$ fixing the boundary pointwise and $n$ marked points as a set. Nielsen realization problem for the braid group asks whether the natural projection $p_n:\Homeo_+(D^2_n)\to B_n:=\pi_0(\Homeo_+(D^2_n))$ has a section over subgroups of $B_n$. All of the previous methods either use torsions or Thurston stability, which do not apply to the pure braid group $PB_n$, the subgroup of $B_n$ that fixes $n$ marked points pointwise.
In this paper, we show that the pure braid group has no realization inside the area-preserving homeomorphisms using rotation numbers.
\end{abstract}

\section{Introduction}
Denote by $D^2$ the 2-dimensional disk. Let $\Homeo_+(D^2_n)$ be the group of orientation-preserving homeomorphisms of $D^2$ fixing the boundary pointwise and $n$ marked points as a set. Denote by $B_n:=\pi_0(\Homeo_+(D^2_n))$. The Nielsen realization problem for $B_n$ asks whether the natural projection
\[
p_n: \Homeo_+(D^2_n)\to B_n
\]
has a section over subgroups of $B_n$. For the whole group $B_n$, this question has several previous results. Salter--Tshishiku \cite{ST} uses Thurston stability to show that $B_n$ has no realization in $\Diff_+(D^2_n)$ and the author \cite{Chen} uses ``hidden torsions" and Markovic's machinery \cite{Mar} to show that $B_n$ has no realization in $\Homeo_+(D^2_n)$. Let $PB_n<B_n$ be the subgroup that preserves $n$ marked points pointwise. The Nielsen realization problem for $PB_n$ is widely open since the two methods in \cite{ST} and \cite{Chen} fail to work and has no hope to repair. The following question is asked by \cite[Question 3.12]{MT} and \cite[Remark 1.4]{ST}.
\begin{qu}[Realization of pure braid group]
Does $PB_n$ have realization as diffeomorphisms or homeomorphisms? In other words, does $p_n$ have sections over $PB_n$?
\end{qu}
Denote by $\Homeo_+^a(D^2_n)$  the group of orientation-preserving, area-preserving homeomorphisms of $D^2$ fixing the boundary pointwise and $n$ marked points as a set.
In this paper, we make a progress proving  the following result.
\begin{thm}\label{main}
The pure braid group cannot be realized as area-preserving homeomorphisms on $D^2_n$ for $n\ge 9$. In other words, the natural projection $p_n^a:\Homeo_+^a(D^2_n)\to B_n$ has no sections over $PB_n$. 
\end{thm}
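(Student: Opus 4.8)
Suppose toward a contradiction that $p_n^a$ admits a homomorphic section $s\colon PB_n\to\Homeo^a_+(D^2_n)$, and write $f_g:=s(g)$. Since $f_g$ fixes $\partial D^2$ pointwise it is isotopic to $\mathrm{id}$ rel boundary (Alexander trick), and since $g$ is \emph{pure} it fixes every marked point $p_i$. Using such an isotopy to identity rel $\partial D^2$, the induced homeomorphism of the punctured disk $A_i=D^2\setminus\{p_i\}$ lifts canonically to the universal cover, so one gets a well-defined (prime-end) rotation number $\rho_i(g)\in\mathbb{R}$ at $p_i$, while the rotation number at the $\partial D^2$-end is $0$. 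By subadditivity of translation numbers, each $\rho_i\colon PB_n\to\mathbb{R}$ is a homogeneous quasimorphism, and it restricts to a homomorphism on every abelian subgroup. The plan is to pit these $\rho_i$, together with the linking homomorphisms $\ell_{ij}\colon PB_n\to\mathbb{Z}$ (the coordinates of $H_1(PB_n)\cong\mathbb{Z}^{\binom n2}$), against the rigidity of area-preserving surface dynamics.

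\textbf{Step 1: detecting the rotation numbers.} The key point is that the $\rho_i$ are forced by the braid combinatorics rather than being arbitrary. I would establish this on a few distinguished subgroups: the rank-$(n-1)$ abelian subgroup $\langle z_2,\dots,z_n\rangle$, where $z_k$ is the full twist on $\{p_1,\dots,p_k\}$, a product of Dehn twists along a nested — hence disjoint — family of curves, so the $z_k$ commute; the center $\langle z_n\rangle\cong\mathbb{Z}$ of $PB_n$; and the product subgroups $PB_3\times\cdots\times PB_3\le PB_n$ obtained by partitioning the $n$ marked points into disjoint triples. On each abelian piece $\rho_i$ is a genuine homomorphism, and using that $z_k$ represents a loop carrying the first $k$ marked points once around a common loop, I would aim to show that $\rho_i$ agrees, up to a bounded error and a universal normalizing constant, with $\sum_{j\ne i}\ell_{ij}$; in particular $\rho_i(z_n)$ is a fixed nonzero integer for every $i$.

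\textbf{Step 2: the dynamical obstruction.} Here the area-preserving hypothesis does the work. If $\rho_i(g)\ne 0$, then Franks' area-preserving form of the Poincar\'e--Birkhoff theorem, applied to the area-preserving annulus homeomorphism $f_g|_{A_i}$ (rotation number $0$ at the outer end, $\rho_i(g)$ at $p_i$), produces interior periodic orbits of $f_g$ of every rational rotation number strictly between $0$ and $\rho_i(g)$, and distinct values $\rho_i(g)\ne\rho_j(g)$ force a whole interval of them, hence infinitely many periodic orbits, all avoiding the marked points and permuted by the centralizer of $g$ in $s(PB_n)$. Combining this with the commuting structure — a central element such as $z_n$ commutes with everything, each triple cluster commutes with the others, the nested twists commute — yields competing constraints: Step 1 forces certain rotation numbers to be nonzero and mutually distinct across the $n$ marked points, while the dynamics forces the associated periodic-orbit families to be organized coherently by all of these commuting maps at once. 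For $n\ge 9$ there are enough marked points (three independent triples, nine prescribed rotation numbers) that these requirements become incompatible — for instance a nontrivial element is forced to have all its rotation numbers equal to $0$, hence to be ``dynamically trivial'' yet non-identity, or a configuration of periodic orbits is demanded that no area-preserving homeomorphism of $D^2$ rel boundary can realize. This is where the bound $n\ge 9$ is used.

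\textbf{Main obstacle.} The hardest step is Step 1 — proving that the rotation numbers $\rho_i$ are genuinely detected by the topology of the braid, rather than being spread out and annihilated. For arbitrary homeomorphisms no such rigidity holds (a Dehn twist can be realized supported far from $p_i$, killing $\rho_i$ on it), so the detection must be extracted from area-preservation: an area-preserving homeomorphism of an annulus cannot decouple its prime-end rotation number at one end from its recurrent dynamics, and this is exactly what should tie $\rho_i$ to the winding numbers $\ell_{ij}$. Making this link quantitative and exact — enough to contradict the relations of $PB_n$ uniformly for all $n\ge 9$ — is the technical heart of the argument.
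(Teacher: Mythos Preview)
Your proposal is a strategy outline rather than a proof, and its central step does not go through. The claim in Step~1 --- that for \emph{any} area-preserving section the prime-end rotation numbers $\rho_i$ are forced, up to bounded error, to agree with $\sum_{j\ne i}\ell_{ij}$ --- is false as stated. Take the generator $T_{ij}\in PB_n$, the Dehn twist about a round curve enclosing only $p_i,p_j$, realized area-preservingly with support in a thin annular neighborhood of that curve. This is the identity near every marked point, so $\rho_k(T_{ij})=0$ for all $k$, while $\ell_{ij}(T_{ij})=1$; homogenizing changes nothing. Nothing about area-preservation prevents this, and you can build a section on a large free abelian subgroup of $PB_n$ (disjoint twists) with all $\rho_i\equiv 0$. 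So the ``detection'' you need cannot come from the $\rho_i$ alone; any obstruction has to come from the \emph{relations} in $PB_n$, not from values on commuting generators. Step~2 is also not a contradiction: an element with all $\rho_i=0$ is in no sense ``dynamically trivial'' --- any area-preserving map supported away from the $p_i$ has this property --- so producing such an element yields nothing.

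The paper's argument is structurally quite different. It passes to the hyperelliptic double cover $S\to D^2_n$, lifting the section to $\widetilde{PB}_n\to\Homeo^a_+(S)$, and applies the minimal decomposition theory of Markovic to $\mathcal{E}(T_c)$ for a separating curve $c$ bounding a genus~$2$ piece (this is where $n\ge 9$ enters). This produces an invariant \emph{characteristic annulus} $A$, inside which one isolates, for each irrational $r\in(0,1)$, a connected component $E$ of the rotation-number level set $E_r$; its annular completion $K(\overline{E})$ is shown to have rotation interval $\{r\}$, hence prime-end rotation number $r$ on both sides. The contradiction then comes from an algebraic relation specific to this setting: $T_c$ is a product of ``left invariant Dehn twist squares'' $T_b^2$ (Proposition~\ref{relation}), each of which is shown to act with rotation number $0$ on the prime-end circle of $K(\overline{E})$; since the prime-end action of $\mathcal{E}(T_c)$ is semiconjugate to an irrational rotation, its centralizer there is abelian with additive rotation number, forcing $r=0$. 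The key point you are missing is precisely this: the obstruction is extracted not from rotation numbers at the marked points but from rotation numbers on a dynamically constructed invariant continuum, and it is a specific braid relation, not a counting of linking numbers, that closes the argument.
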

We remark that the Nielsen realization problem is closely related to the existence of flat structures on a surface bundle. We refer the reader to \cite{MT} for more history and background.

\para{Comparing with the method in \cite{ChenMark}}
The novelty of this paper is to provide a different ending towards \cite{ChenMark}. The original ending is to use the fact that certain Dehn twist is a product of commutator in its centralizer. However, such structure does not hold in $PB_n$. Instead, we prove a stronger dynamical property about Dehn twists about non-separating curves. In the beginning of Section 4, we present an outline of the proof. Since this paper has a lot of overlap with \cite{ChenMark}, we omit or sketch many proofs to reduce redundancy. 

\para{Organization of the paper}
\begin{itemize}
\item In Section 2, we discuss rotation numbers;
\item In Section 3, we discuss the pure braid group and the minimal decomposition theory;
\item In Section 4, we give an outline of the proof and finish the argument.
\end{itemize}

\para{Acknowledgement}
The author would like to thank Vlad Markovic for helpful discussion.

\section{Rotation numbers of annulus homeomorphisms}
In this section, we discuss the properties of rotation numbers on annuli. 

\subsection{Rotation number of an area-preserving homeomorphism of an annulus}
Firstly, we define the rotation number for geometric annuli. Let 
\[
N=N(r)=\{w \in \mathbb{C}:\frac{1}{r}< |w|<r \}
\]
be the geometric annulus in the complex plane $\mathbb{C}$. Denote the geometric strip in $\mathbb{C}$ by 
\[
P=P(r)=\{x+iy=z\in \mathbb{C}:|y|<\frac{\log r}{2\pi}\}.
\] 
The map $\pi(z)=e^{2\pi iz}$ is a holomorphic covering map  $\pi: P\to N$. The deck transformation on $P$ is $T(x,y)=(x+1,y)$. 

Denote by $p_1:P\to \mathbb{R}$ the projection to the $x$-coordinate, and by $\Homeo_+(N)$ the group of homeomorphisms of $N$ that preserves orientation and the two ends.  Fix $f\in \Homeo_+(N)$, and $x\in N$, and let $\widetilde{x}\in P$ and $\widetilde{f}\in \Homeo_+(P)$ denote lifts of $x$ and $f$ respectively. We define the translation number of the lift $\widetilde{f}$ at $\widetilde{x}$ by
\begin{equation}\label{rot-0}
\rho(\widetilde{f},\widetilde{x},P)=\lim_{n\to \infty} (p_1(\widetilde{f}^n(\widetilde{x}))-p_1(\widetilde{x}))/n.
\end{equation}
The rotation number of $f$ at $x$ is then defined as
\begin{equation}\label{rot}
\rho(f,x,N)=\rho(\widetilde{f},\widetilde{x},P)  \,\,\,\,\,\, \,   \,\,\,\,\,\, \, (\text{mod 1}).
\end{equation}

The rotation number is not defined everywhere (see, e.g., \cite{Franks} for more background on rotation numbers). The closed annulus  $N_c$ is 
\[
N_c=\{\omega \in \mathbb{C}:\frac{1}{r}\le |\omega|\le r \},
\]
For $f\in \Homeo_+(N_c)$, the rotation and translation numbers are defined analogously.

Let $A$ be an open annulus embedded in a Riemann surface (in particular this endows $A$ with the complex structure). By the Riemann mapping theorem, there is a unique $N(r)=N$ and a conformal map $u_A: A\to N$. For any $f\in \Homeo_+(A)$ (the group of end-preserving homeomorphisms), we 
define the rotation number of $f$ on $A$ by
\[\rho(f,x,A):=\rho(g,u_A(x), N),\]  where $g=u_A\circ f\circ u_A^{-1}$.

We have the following theorems of Poincar\'e-Birkhoff and Handel about rotation numbers \cite{Handel} (See also Franks \cite{Franks}). 
\begin{thm}[Properties of rotation numbers]\label{rotationproperty}
If $f: N_c \to N_c$ is an orientation preserving, boundary component preserving, area-preserving homeomorphism and $\widetilde{f}: P_c \to P_c$ is any lift, then:
\begin{itemize}
\item (Handel) The translation set \[
R(\widetilde{f})=\bigcup_{\widetilde{x} \in P_c} \rho(\widetilde{f},\widetilde{x},P_c)\] is a closed interval.
\item (Poincar\'e-Birkhoff) If $r\in R(\widetilde{f})$ is rational, then there exists a periodic orbit of $f$ realizing the rotation number $r$ mod $1$. 
\end{itemize}
\end{thm}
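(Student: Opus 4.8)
The plan is to establish the two bullet points together, since they are logically intertwined: the Poincar\'e--Birkhoff statement produces periodic orbits realizing the rational points of the translation set, and Handel's closedness statement then upgrades the set of realized numbers to a genuine interval. I would begin with the bookkeeping. Fix the lift $\widetilde f$ and set $\phi(\widetilde x)=p_1(\widetilde f(\widetilde x))-p_1(\widetilde x)$; since $\phi\circ T=\phi$, this descends to a continuous function on the compact annulus $N_c$, so $\|\phi\|_\infty<\infty$ and hence $R(\widetilde f)\subseteq[-\|\phi\|_\infty,\|\phi\|_\infty]$. Rewriting $\rho(\widetilde f,\widetilde x,P_c)=\lim_n\tfrac1n\sum_{k=0}^{n-1}\phi(f^kx)$ exhibits every element of $R(\widetilde f)$ as a Birkhoff average of $\phi$ and, via weak-$*$ limits of the empirical measures $\tfrac1n\sum_{k=0}^{n-1}\delta_{f^kx}$, as $\int_{N_c}\phi\,d\mu$ for some $f$-invariant Borel probability measure $\mu$; conversely, for ergodic $\mu$ the Birkhoff ergodic theorem produces a $\mu$-full set of points with rotation number $\int\phi\,d\mu$, and Krylov--Bogolyubov guarantees such $\mu$ exist, so $R(\widetilde f)\ne\varnothing$. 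It is also worth recording that the two boundary circles of $N_c$ are $f$-invariant and carry ordinary Poincar\'e rotation numbers $\rho_\pm\in R(\widetilde f)$; the classical Poincar\'e--Birkhoff theorem is the case in which $p/q$ lies strictly between $\rho_-$ and $\rho_+$, and what is needed here is the stronger version in which the two numbers straddling $p/q$ may be rotation numbers of interior points.

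For the Poincar\'e--Birkhoff bullet I would follow Franks \cite{Franks}. Let $p/q\in R(\widetilde f)$ with $\gcd(p,q)=1$. Combining the measure description above with the ergodic decomposition reduces matters to two cases: either two ergodic components of a realizing measure lie strictly on opposite sides of $p/q$, so that there exist points with rotation numbers $<p/q$ and $>p/q$; or a positive-measure set of Poincar\'e-recurrent points has rotation number exactly $p/q$. In either case consider $g=\widetilde f^{\,q}\,T^{-p}$, a homeomorphism of the plane $\interior(P_c)\cong\mathbb R^2$ that commutes with $T$ and preserves area, and which has a non-wandering point whose mean displacement under $g$ equals $0$; one must show $g$ has a fixed point --- equivalently that $f$ has a $q$-periodic orbit along which $\widetilde f^{\,q}=T^p$, i.e.\ a periodic orbit of rotation number $p/q$. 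This is exactly the situation governed by Franks' lemma on periodic disk chains: were $g$ fixed-point free, the Brouwer plane translation theorem would provide a free disk, the recurrence supplied by area-preservation would allow one to chain translates of it into a periodic disk chain, and the resulting combinatorial constraint contradicts mean displacement $0$. This produces the periodic orbit realizing $p/q$ mod $1$.

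For Handel's bullet, the preceding step shows that if $\alpha,\beta\in R(\widetilde f)$ with $\alpha<\beta$, then every rational in $(\alpha,\beta)$ --- being strictly straddled by the rotation numbers $\alpha,\beta$ of honest points --- lies in $R(\widetilde f)$; hence $\mathbb Q\cap(\inf R(\widetilde f),\sup R(\widetilde f))\subseteq R(\widetilde f)$, a dense subset of $[\inf R(\widetilde f),\sup R(\widetilde f)]$. It then remains to prove that $R(\widetilde f)$ is closed, for this forces $R(\widetilde f)=[\inf R(\widetilde f),\sup R(\widetilde f)]$. The closedness is the substantive content and is Handel's theorem \cite{Handel}: given $c_j\in R(\widetilde f)$ with $c_j\to c$ and $c$ irrational (the rational case being already settled), one extracts an $f$-invariant measure of rotation number $c$ and then, using area-preservation in an essential way, realizes $c$ as the rotation number of an actual orbit via Handel's Brouwer-theoretic fixed point machinery for area-preserving surface homeomorphisms. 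I expect this last step to be the real obstacle: the measure-theoretic ``rotation set'' $\{\int_{N_c}\phi\,d\mu:\mu\in M_f(N_c)\}$ is visibly a compact interval, by convexity and weak-$*$ compactness of the space $M_f(N_c)$ of invariant measures, but identifying it with the \emph{pointwise} translation set $R(\widetilde f)$ --- in particular realizing irrational values and non-ergodic averages by individual orbits --- is precisely where the delicate surface-dynamics arguments of \cite{Handel}, and the area-preserving hypothesis, cannot be avoided; I would invoke that theorem rather than reprove it.
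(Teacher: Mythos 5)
Your proposal is consistent with the paper, which states this result as a classical theorem of Handel and Poincar\'e--Birkhoff (citing \cite{Handel} and Franks \cite{Franks}) and gives no proof of its own; your reduction via invariant measures, ergodic decomposition, and Franks' periodic disk chain argument is sound, and your deferral to Handel for the closedness of the pointwise translation set matches exactly what the paper relies on. So the approach is essentially the same as the paper's, just with more of the standard reductions spelled out.
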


\subsection{Separators and its properties}
We let $A$ continue to denote an open annulus embedded in a Riemann surface. Then $A$ has two ends and we choose  one of them to be the left end and the other one to be the right end. We call a  subset $X\subset A$ \emph{separating} (or essential) if every 
arc $\gamma \subset A$ which connects the two ends of $A$  must intersect $X$. 

\begin{defn}[Separator]
We call a subset $M\subset A$ a \emph{separator} if $M$ is compact, connected and separating.
\end{defn}
The complement of $M$ in $A$ is a disjoint union of open sets. We have the following lemma.
\begin{lem}\label{separator}
Let $M$ be a separator. Then there are exactly two connected components $A_L(M)$ and $A_R(M)$ of $A-M$ which are open annuli homotopic to $A$ and with the property that $A_L(M)$ contains the left end of $A$ and $A_R(M)$ contains the right end of $A$. All other components of $A-M$ are simply connected.
\end{lem}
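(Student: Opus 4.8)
The plan is to forget the ambient Riemann surface, identify $A$ topologically with $S^1\times\mathbb{R}$, and compactify to the sphere, where Alexander duality converts the connectedness of $M$ into the statement that every complementary region is a disk. First I would fix a homeomorphism $A\cong S^1\times\mathbb{R}$ carrying the left end to the $-\infty$ end and the right end to the $+\infty$ end. Since $M$ is compact it lies in $S^1\times[a,b]$ for some $a<b$, so $U:=S^1\times(-\infty,a)$ and $V:=S^1\times(b,+\infty)$ are disjoint essential open sub-annuli of $A$ disjoint from $M$. I then define $A_L(M)$ (resp.\ $A_R(M)$) to be the connected component of $A-M$ containing $U$ (resp.\ $V$); by construction each of these contains the corresponding end of $A$.

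Next I would embed $A=S^1\times\mathbb{R}$ into $S^2$ as $S^2\setminus\{N,S\}$, where $N$ compactifies the right end and $S$ the left end, so that $M$ becomes a compact connected subset of $S^2$ missing $\{N,S\}$. Each connected component $W$ of the open surface $S^2-M$ is a connected open surface embedded in $S^2$, hence planar with free fundamental group. Alexander duality (in \v{C}ech cohomology) gives $\tilde H_1(S^2-M)\cong\tilde{\check H}^0(M)=0$ since $M$ is connected and nonempty; hence $H_1(W)=0$, so $\pi_1(W)=0$, so $W$ is an open disk (a simply connected noncompact surface, hence $\cong\mathbb{R}^2$ — in this setting one may instead invoke uniformization, as $S^2-M$ inherits a complex structure). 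Let $W_S$ and $W_N$ be the disk-components containing $S$ and $N$. If $W_S=W_N$, path-connectedness of this disk yields an arc in $S^2-M$ from $S$ to $N$ whose restriction to $A$ is an arc joining the two ends of $A$ and disjoint from $M$, contradicting that $M$ is separating; hence $W_S\neq W_N$.

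Finally I would reassemble $A-M=(S^2-M)-\{N,S\}$. The component of $A-M$ containing $U$ is $W_S-\{S\}$, a punctured disk, i.e.\ an open annulus, so $A_L(M)=W_S-\{S\}$; symmetrically $A_R(M)=W_N-\{N\}$, and these are distinct and disjoint. Every other component of $S^2-M$ misses $\{N,S\}$ and is therefore also a component of $A-M$, and it is an open disk, hence simply connected; this exhausts the components of $A-M$ and gives the ``exactly two'' claim. To see that $A_L(M)$ is homotopic to $A$: it contains $U$, and $U\hookrightarrow A$ is a homotopy equivalence, so $\pi_1(A_L(M))\to\pi_1(A)$ is a surjection $\mathbb{Z}\to\mathbb{Z}$, hence an isomorphism, whence $A_L(M)\hookrightarrow A$ is a homotopy equivalence of annuli; likewise for $A_R(M)$.

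The only substantive step is the vanishing $H_1(S^2-M)=0$, where connectedness of $M$ is turned, via Alexander duality, into ``every complementary region is a disk''; everything else is bookkeeping with ends, and the hypotheses of compactness, connectedness, and separation are each used exactly once (respectively: for the containment $M\subset S^1\times[a,b]$; for the duality vanishing; for $W_S\neq W_N$). The main point to write carefully is the legitimacy of Alexander duality for a possibly wild compact connected $M$, together with the classification fact that a simply connected noncompact surface is homeomorphic to $\mathbb{R}^2$, which I would settle either by citing the classification of surfaces or by uniformization.
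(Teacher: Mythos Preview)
Your proof is correct and follows essentially the same route as the paper: compactify $A$ to $S^2$ by adding a point at each end, observe that every component of $S^2-M$ is simply connected, use the separating hypothesis to conclude the two end-components are distinct, and remove the added points to obtain the two annuli. The paper simply asserts the simple connectivity of the complementary components as an ``observation'' and declares the remaining verifications ``easy''; you supply the missing justification via Alexander duality and spell out the homotopy equivalence with $A$, so your argument is strictly more detailed but not genuinely different in strategy.
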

\begin{proof}
We compactify the annulus $A$ by adding  points $p_L$ and $p_R$ to the corresponding ends of $A$. The compactifications is a two sphere $S^2$. Moreover, $M$ is a compact and connected subset of $S^2 -\{p_L,p_R\}$.

Now, we observe that every component of  $S^2-M$ is  simply connected. Denote by $\Omega_L$ and $\Omega_R$ the  connected components of $S^2-M$  
containing $p_L$ and $p_R$  respectively. Since $M$ is separating we conclude that these are two different components. We define $A_L(M)=\Omega_L-p_L$ and $A_R(M)=\Omega_R-p_R$. It is easy to verify that these are required annuli.
\end{proof}

We now prove another property of a separator. Let $\pi: \widetilde{A}\to A$ be the universal cover.

\begin{prop}\label{sepprop}
Let $M \subset A$ be a compact domain with smooth boundary. Then $\pi^{-1}(M)$ is connected.
\end{prop}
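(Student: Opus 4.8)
The plan is to deduce the connectedness of $\pi^{-1}(M)$ from Lemma~\ref{separator} together with a standard covering-space argument. Since $M$ is a compact domain with smooth boundary that is separating (recall we are in the situation where $M$ is a separator, so in particular $M$ is connected and carries the two complementary annuli $A_L(M)$ and $A_R(M)$), I would first analyze the inclusion $M\hookrightarrow A$ on the level of fundamental groups. The key claim is that $\pi_1(M)\to\pi_1(A)\cong\mathbb{Z}$ is surjective: a separating compact connected set in an annulus must ``wrap around,'' so it contains a loop that is homotopically essential in $A$. Concretely, one can take a core curve $c$ of $A$; because $M$ separates the two ends, $c$ can be homotoped so that the essential part of it is captured by $M$, or more cleanly, one argues that if $\pi_1(M)\to\mathbb{Z}$ had image $k\mathbb{Z}$ with $k\ge 2$ (or were trivial), then lifting to the corresponding cover would disconnect $M$ in a way incompatible with $M$ being connected and separating. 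The smooth-boundary hypothesis is what lets us talk about $M$ as a manifold with boundary and apply these homotopy-theoretic statements cleanly (e.g., $M$ is homotopy equivalent to a finite CW complex, its boundary circles are genuine embedded circles).

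Granting surjectivity of $\pi_1(M)\to\pi_1(A)$, the connectedness of $\pi^{-1}(M)$ is then a general fact: for a covering $\pi:\widetilde A\to A$ with $A$ connected and a connected, locally path-connected subset $M\subset A$, the preimage $\pi^{-1}(M)$ is connected if and only if the image of $\pi_1(M)\to\pi_1(A)$ (for any basepoint in $M$) is all of $\pi_1(A)$ — equivalently, if and only if the covering restricted over $M$, namely $\pi^{-1}(M)\to M$, is connected, which since $\widetilde A$ is the universal cover amounts to saying $\pi^{-1}(M)\to M$ is the covering corresponding to the (full) subgroup $\pi_1(M)$. I would state and prove this as a short lemma: pick a point $\widetilde x_0\in\pi^{-1}(M)$, let $C$ be its path-component in $\pi^{-1}(M)$; any other point $\widetilde y_0$ over some $y_0\in M$ is connected to $\widetilde x_0$ within $\pi^{-1}(M)$ precisely because a loop in $A$ based at $x_0$ realizing the required deck element can be chosen inside $M$ (by surjectivity), and lifting it moves $\widetilde x_0$ to $\widetilde y_0$ while staying in $\pi^{-1}(M)$.

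For the surjectivity claim itself, here is the argument I would write out. Suppose the image of $\pi_1(M)$ in $\pi_1(A)\cong\mathbb{Z}$ is $k\mathbb{Z}$. If $k=0$, then $M$ lifts to the universal cover; but $M$ is compact and $\widetilde A\cong\mathbb{R}^2$ (or an infinite strip), and a compact set cannot separate the two ends of $A$ after we note that the two ends of $A$ lift to the two ends of the strip in a way that a compact lift of $M$ fails to block — more carefully, lift an arc $\gamma$ joining the two ends of $A$ and disjoint from $M$ (which exists when $M$ does not separate in the relevant cover), contradicting that $M$ is separating. If $k\ge 1$, pass to the $k$-fold cyclic cover $A_k\to A$; then $M$ lifts homeomorphically to $A_k$, and each of the $k$ sheets meeting $\pi^{-1}(M)$ in $A_k$ is a copy of $M$, but a single lifted copy of $M$ in $A_k$ is again separating in $A_k$ only if $k=1$ (otherwise a lift of an end-to-end arc can be routed around). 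The honest statement is: a connected separating compact subset of the annulus has $\pi_1\to\pi_1(A)$ surjective, which one proves via Alexander duality or a direct Mayer–Vietoris / degree argument on the sphere compactification $S^2$ from Lemma~\ref{separator}.

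**Main obstacle.** The step I expect to be most delicate is making the "routing an arc around a non-surjective $M$" intuition rigorous — i.e., cleanly proving that if $\pi_1(M)\to\pi_1(A)$ is not surjective then $M$ fails to separate the two ends. The cleanest route is probably to avoid arcs entirely and argue on the sphere: using the compactification $S^2=A\cup\{p_L,p_R\}$ from the proof of Lemma~\ref{separator}, connectedness of $M$ and Alexander duality give $\check H^0(S^2\setminus M)\cong \widetilde H_1(M)\oplus\mathbb{Z}$-type information controlling how many components $S^2\setminus M$ has and how they sit relative to $p_L,p_R$; combined with smoothness of $\partial M$ (so $\partial M$ is a disjoint union of circles and one can count which ones separate $p_L$ from $p_R$), this pins down the homotopy class data of $M$ in $A$ and forces surjectivity. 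Alternatively, and perhaps more in the spirit of the paper, one can simply cite Lemma~\ref{separator} to get that $A_L(M)$ and $A_R(M)$ are annuli homotopic to $A$, deduce that the inclusion $A\setminus A_L(M)\hookrightarrow A$ — a compact neighborhood-type piece containing $M$ — is $\pi_1$-surjective, and then observe $M$ carries the same $\pi_1$ image as that piece; this sidesteps duality at the cost of a small amount of point-set fiddling with the components.
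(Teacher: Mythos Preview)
Your proposal is correct and rests on the same core observation as the paper, namely that $M$ carries a loop essential in $A$; but you package this as ``$\pi_1(M)\to\pi_1(A)$ is surjective, hence $\pi^{-1}(M)$ is connected by the general covering-space criterion,'' whereas the paper argues far more concretely. The paper simply notes that, since $M$ is a compact domain with smooth boundary and separates the ends, some boundary circle $\gamma\subset M$ must be essential in $A$ (there are only finitely many boundary components). Then $\pi^{-1}(\gamma)$ is a single $T$-invariant line in the strip, hence connected; and since $M$ is a connected domain with smooth boundary, every $p\in M$ joins to $\gamma$ by an arc inside $M$, whose lift carries any preimage $\widetilde p$ into the component containing $\pi^{-1}(\gamma)$.

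So the step you flag as the ``main obstacle'' --- producing an essential loop in $M$ --- is exactly what the paper dispatches in one sentence via the boundary circles, with no need for Alexander duality, Mayer--Vietoris, or passing to intermediate cyclic covers. Your abstract formulation is perfectly valid and has the virtue of isolating the covering-space principle cleanly, but for this particular statement the paper's hands-on argument is shorter and uses nothing beyond path-lifting.
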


\begin{proof}
Since $M$ is a compact domain with boundary which separates the two ends of $A$, we can find a circle $\gamma\subset M$ which is essential in $A$ (i.e. $\gamma$ is a separator itself) (note that $M$ has only finitely many boundary components).  Denote by $T$ the deck transformation of $\widetilde{A}$. Thus, the lift $\pi^{-1}(\gamma)$ is a $T$-invariant, connected subset of $\widetilde{A}$. Let $C$ be the component of $\pi^{-1}(M)$ which contains $\pi^{-1}(\gamma)$. Then $C$ is $T$ invariant. We show $\pi^{-1}(M)=C$.

Let $p\in M$. Since $M$ is a compact  domain with smooth boundary, we can find an embedded closed arc $\alpha\subset M$ which connects $p$ and $\gamma$. Let $\widetilde{p}$ be a lift of $p$ and let $\widetilde{\alpha}$ be the corresponding lift of $\alpha$ such that $\widetilde{p}$ is one of its endpoints. Then, the other endpoint of  $\widetilde{\alpha}$ is in $\pi^{-1}(\gamma)$, and this shows that  $\widetilde{p} \in C$. This concludes the proof.

\end{proof}

Now we discuss an ordering on the set of separators. 
\begin{prop}\label{ordering}
Suppose $M_1,M_2 \subset A$ are two disjoint separators. Then either $M_1\subset A_L(M_2)$  or  $M_1\subset A_R(M_2)$. Moreover, $M_1\subset A_L(M_2)$ implies  $M_2\subset A_R(M_1)$. 
\end{prop}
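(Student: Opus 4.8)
The plan is to carry out everything in the sphere compactification $S^2 = A \cup \{p_L,p_R\}$ used in the proof of Lemma \ref{separator}. There $M_1$ and $M_2$ become disjoint, compact, connected subsets of $S^2$, each separating $p_L$ from $p_R$. Write $\Omega_L(M_i)$ and $\Omega_R(M_i)$ for the components of $S^2-M_i$ containing $p_L$ and $p_R$, so that $A_L(M_i)=\Omega_L(M_i)-p_L$ and $A_R(M_i)=\Omega_R(M_i)-p_R$; recall from the proof of Lemma \ref{separator} that $\Omega_L(M_i)\neq\Omega_R(M_i)$. The one elementary fact I would isolate first is that a connected component $C$ of an open set $S^2-M$ is itself open, hence $\partial C\subset M$, and $\partial C\neq\emptyset$ as soon as $C\neq S^2$. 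Also I would record that if $X,Y$ are connected with $\overline X\cap Y\neq\emptyset$ then $X\cup Y$ is connected.

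For the first assertion: since $M_1$ is connected and disjoint from $M_2$, it lies in a single component $U$ of $S^2-M_2$. Suppose $U$ is neither $\Omega_L(M_2)$ nor $\Omega_R(M_2)$. Since $\partial\Omega_L(M_2)\subset M_2$ is nonempty, the sets $\Omega_L(M_2)\cup M_2$ and $\Omega_R(M_2)\cup M_2$ are each connected, and hence so is their union $\Omega_L(M_2)\cup M_2\cup\Omega_R(M_2)$; this is a connected subset of $S^2$ containing both $p_L$ and $p_R$, and it is disjoint from $U$, hence from $M_1$. That contradicts the fact that $M_1$ separates $p_L$ from $p_R$. Therefore $M_1\subset\Omega_L(M_2)$ or $M_1\subset\Omega_R(M_2)$, which (as $p_L,p_R\notin M_1$) is precisely $M_1\subset A_L(M_2)$ or $M_1\subset A_R(M_2)$.

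For the "moreover" part, assume $M_1\subset A_L(M_2)$, i.e. $M_1\subset\Omega_L(M_2)$, and suppose toward a contradiction that $M_2\not\subset A_R(M_1)$; applying the first assertion with the roles of $M_1,M_2$ swapped, this forces $M_2\subset\Omega_L(M_1)$. From $M_1\subset\Omega_L(M_2)$ and disjointness of $\Omega_L(M_2),\Omega_R(M_2)$ we get $M_1\cap\Omega_R(M_2)=\emptyset$, so the connected set $\Omega_R(M_2)$ lies in a single component of $S^2-M_1$, and since $p_R\in\Omega_R(M_2)$ this gives $\Omega_R(M_2)\subset\Omega_R(M_1)$. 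Symmetrically, $M_2\subset\Omega_L(M_1)$ gives $\Omega_R(M_1)\subset\Omega_R(M_2)$, hence $\Omega_R(M_1)=\Omega_R(M_2)=:\Omega$. But then $\partial\Omega\subset M_1$ (as a component of $S^2-M_1$) and $\partial\Omega\subset M_2$ (as a component of $S^2-M_2$), so $\partial\Omega\subset M_1\cap M_2=\emptyset$; since $\Omega\neq S^2$ (it omits $p_L$) we have $\partial\Omega\neq\emptyset$, a contradiction. Hence $M_2\subset A_R(M_1)$. I do not expect a genuine obstacle here; the only points requiring care are the two "union is connected" steps and, more importantly, keeping the bookkeeping straight between the open components $\Omega_L,\Omega_R$ of the sphere and their intersections with $A$, and consistently using separation in $S^2$ rather than in $A$.
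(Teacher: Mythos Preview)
Your proof is correct. Both halves go through cleanly: the connectedness of $\Omega_L(M_2)\cup M_2\cup\Omega_R(M_2)$ via the nonempty boundary observation is exactly what is needed to contradict separation in the first part, and the boundary argument $\partial\Omega\subset M_1\cap M_2=\emptyset$ in the second part is airtight.

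Your route differs from the paper's, particularly in the first assertion. The paper argues via the universal cover $\pi:\widetilde{A}\to A$: if $M_1$ landed in a simply connected (bubble) component $C$ of $A-M_2$, one thickens $M_1$ to a compact smooth domain $N_1\subset C$ and invokes Proposition~\ref{sepprop} to see that $\pi^{-1}(N_1)$ is connected, hence contained in a single sheet of $\pi^{-1}(C)$; this contradicts deck-invariance of $\pi^{-1}(N_1)$. Your argument stays entirely in the sphere compactification and uses only elementary point-set topology, avoiding both the smooth thickening and the appeal to Proposition~\ref{sepprop}. This is more self-contained and arguably cleaner. For the second assertion the two approaches are close in spirit (both reduce, via symmetry and the first part, to the situation $M_1\subset A_L(M_2)$ and $M_2\subset A_L(M_1)$ and then derive an absurdity), but your $\partial\Omega$ argument makes the contradiction explicit without needing the intermediate claim $A_L(M_1)\subset A_L(M_2)$ that the paper asserts.
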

\begin{proof}
Since $M_1$ is connected it follows that $M_1$ is a subset of a connected component $C$ of $A-M_2$. Since $C$ is open, we know that there is a neighborhood $N_1$ of $M_1$ with smooth boundary such that $N_1\subset C$ (It is elementary to construct such $N_1$). If $C$ is simply connected, the cover $\pi^{-1}(C)\to C$ is a trivial cover. Let $\widetilde{C}$ be a connected component of $\pi^{-1}(C)$. By Proposition \ref{sepprop}, the set $\pi^{-1}(N_1)$ is connected so it is contained in a single connected component of $\pi^{-1}(C)$. However, this contradicts the fact that  $\pi^{-1}(N_1)$ is also translation invariant. Thus,  either $M_1\subset A_L(M_2)$ or $M_1\subset A_R(M_2)$.

Suppose  $M_1\subset A_L(M_2)$. Then $A_L(M_1)\subset A_L(M_2)$ as well. On the other hand, by the first part of the proposition we already know that either $M_2\subset A_L(M_1)$ or $M_2\subset A_R(M_1)$. If  $M_2\subset A_L(M_1)$, then $A_L(M_2)\subset A_L(M_1)$. This shows that $A_L(M_1)\subset A_L(M_2)$ which implies that $M_2\subset A_L(M_2)$. This is absurd so we must have $M_2\subset A_R(M_1)$.
\end{proof}

\vskip .3cm

\begin{defn}\label{def-ordering}

The inclusion $M_1\subset A_L(M_2)$ is denoted as $M_1<M_2$.

\end{defn}

\vskip .3cm

\subsection{The rotation interval of an annular continuum and prime ends}

Let  $K\subset A$ be a separator (in literature also known as an \emph{essential continuum}). We call $K$ an essential \emph{annular continuum} if $A - K$ has exactly two components. Observe that an essential annular continuum can be expressed  as a decreasing intersection of essential closed topological annuli in $A$.

It is possible to turn any separator $M\subset A$ into  an essential  annular continuum. Let $M$ be a separating connected set. By Lemma \ref{separator}, we know that $A-M$ has exactly two connected annular components $A_L(M)$ and $A_R(M)$, and all other components of $A-M$ are simply connected. We call a simply connected component of $A-M$ a \emph{bubble component}. Then the \emph{annular completion} $K(M)$ of $M$ is defined as the union of $M$ and the corresponding  bubble components of $A-M$.

\begin{prop}
Let $M\subset A$ be a separator. Then the annular completion $K(M)$ is an annular continuum. 
\end{prop}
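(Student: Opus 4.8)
The plan is to show that $K(M)$ satisfies the definition of an essential annular continuum, i.e.\ that it is a separator (compact, connected, separating) and that $A - K(M)$ has exactly two components. Since $M < K(M)$ will follow from the construction, the two complementary annular components should be precisely $A_L(M)$ minus its closure-adjustments and $A_R(M)$; in fact I expect $A_L(K(M)) = A_L(M)$ and $A_R(K(M)) = A_R(M)$, because adding only the \emph{bubble components} (the simply connected components of $A - M$) does not touch the two annular components guaranteed by Lemma~\ref{separator}.

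First I would verify separating-ness and the component count together. Any arc $\gamma$ joining the two ends of $A$ must meet $M$ (since $M$ is a separator), hence meets $K(M) \supseteq M$; so $K(M)$ is separating. For the complement, note that $A - K(M) = A_L(M) \sqcup A_R(M)$ by construction, since we have removed $M$ together with exactly the bubble components. These are two disjoint open annuli homotopic to $A$, containing the left and right ends respectively, so $A - K(M)$ has exactly two components, each non-simply-connected — which is the defining property of an annular continuum, and also shows $K(M)$ is a separator in the sense that it separates the ends with no leftover bubbles.

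The two remaining points are that $K(M)$ is compact and connected. For connectedness: $M$ is connected, and each bubble component $B$ has $\partial B \subset M$ (its frontier lies in $M$ because $B$ is a component of the open set $A - M$), so $B \cup M$ is connected; taking the union over all bubbles keeps everything connected through $M$. For compactness: I would argue that $K(M)$ is closed in $A$ by showing its complement $A_L(M) \sqcup A_R(M)$ is open — which is immediate since these are components of the open set $A - M$ — and then that $K(M)$ is bounded away from the two ends, because $A_L(M)$ (resp.\ $A_R(M)$) is an open neighborhood of the left (resp.\ right) end disjoint from $K(M)$, so in the sphere compactification $S^2$ of Lemma~\ref{separator} the set $K(M)$ is a closed subset of $S^2 \setminus \{p_L, p_R\}$ that stays in a compact region, hence compact.

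The main obstacle I anticipate is the compactness argument: one must be careful that the union of \emph{infinitely many} bubble components together with $M$ is genuinely closed. The clean way is exactly the complement argument above — rather than controlling the bubbles directly, show $A - K(M) = A_L(M) \cup A_R(M)$ is open, which sidesteps any pathology in how small bubbles can accumulate onto $M$. One subtlety to nail down is the identity $A - K(M) = A_L(M) \sqcup A_R(M)$ itself: this needs that every component of $A - M$ is \emph{either} one of the two annular components \emph{or} a bubble component, which is precisely the content of Lemma~\ref{separator}, so no new work is required there.
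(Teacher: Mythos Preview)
Your proof is correct. Both you and the paper use the two-point compactification $S^2$ of $A$, but the paper's argument is more compressed: it simply observes that $K(M) = S^2 \setminus (\Omega_L \cup \Omega_R)$, where $\Omega_L = A_L(M) \cup \{p_L\}$ and $\Omega_R = A_R(M) \cup \{p_R\}$ are two disjoint open discs in $S^2$, and then appeals to the fact that the complement of two disjoint open discs in $S^2$ is connected (compactness and the two-component complement being immediate from this description as well). You instead verify each property of an annular continuum separately; in particular, your connectedness argument---each bubble $B$ has frontier in $M$, so $M \cup B = M \cup \overline{B}$ is connected---is a genuinely different, more elementary route than the paper's global topological fact about $S^2$. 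Your approach has the advantage of being self-contained point-set topology, while the paper's buys brevity by packaging everything into one identity.
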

\begin{proof}
We can again compactify $A$ by adding the points $p_L$ and $p_R$, one at each end. The compactification is the two sphere $S^2$. Then $A_L(M)$ and $A_R(M)$ are two disjoint open discs in $S^2$, and $K(M)=S^2-(A_L(M)\cup A_R(M))$. But the complement of two disjoint open discs in $S^2$ is connected. This proves the proposition.

\end{proof}

Now let $f$ be a homeomorphism of $A$ that leaves an annular continuum $K$ invariant. If $\mu$ is an invariant Borel probability measure supported on $K$,  we define the $\mu$-rotation number 
\[
\sigma(f, \mu)=\int_A\phi d\mu
\]
where $\phi: A \to \mathbb{R}$ is the function which lifts to the function $p_1\circ f-p_1$ on $\widetilde{A}$ (recall that $p_1:\widetilde{A} \to  \mathbb{R}$ is the projection onto the first coordinate).

The set of $f$ invariant Borel probability measures on $K$ is a non empty, convex, and compact set (with respect to the weak topology on the space of measures).
We define the \emph{rotation interval} of $K$
\[
\sigma(f, K)=\{\sigma(f, \mu)|\mu\in M(K)\}
\]
which is a non-empty segment $[\alpha,\beta]$ of $\mathbb{R}$. The interval is non empty because there exists at least one $f$ invariant measure, and it is an interval because the set of $f$ invariant measures is convex.

The following is a classical result of Franks--Le Calvez \cite[Corollary 3.1]{FLC}.
\begin{prop}
If $\sigma(f, K)=\{\alpha\}$, the sequence 
\[
\frac{p_1\circ f^n(x)-p_1(x)}{n}
\]
converges uniformly for $x\in \pi^{-1}(K)$ to the constant function $\alpha$. This implies that points in $K$ all have the rotation number $\alpha$.
\end{prop}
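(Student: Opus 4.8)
The plan is to identify the displacement quotient with a Birkhoff average and then reduce the statement to a standard compactness fact from ergodic optimization. First I would record the cocycle identity: set $\phi_0:=p_1\circ\widetilde f-p_1$ on $\widetilde A$. Since $\widetilde f$ commutes with the deck transformation $T$ and $p_1\circ T=p_1+1$, the function $\phi_0$ is $T$-invariant, so it descends to the continuous function $\phi$ on $A$ used in the definition of $\sigma(f,\mu)$. Telescoping then gives, for every $x\in K$ and every lift $\widetilde x\in\pi^{-1}(x)$,
\[
h_n(\widetilde x):=\frac{p_1(\widetilde f^n(\widetilde x))-p_1(\widetilde x)}{n}=\frac1n\sum_{k=0}^{n-1}\phi_0(\widetilde f^k\widetilde x)=\frac1n\sum_{k=0}^{n-1}\phi(f^k x).
\]
As $h_n$ is itself $T$-invariant, $\sup_{\widetilde x\in\pi^{-1}(K)}|h_n(\widetilde x)-\alpha|=\sup_{x\in K}\bigl|\tfrac1n\sum_{k=0}^{n-1}\phi(f^kx)-\alpha\bigr|$, so the assertion reduces to proving that the Birkhoff averages of $\phi$ converge to $\alpha$ uniformly on the compact $f$-invariant set $K$.

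Next I would prove the general fact behind this: if $f$ is a homeomorphism of a compact metric space $K$, $\phi\in C(K)$, and $\int_K\phi\,d\mu=\alpha$ for \emph{every} $f$-invariant Borel probability measure $\mu$ on $K$, then $\frac1n\sum_{k=0}^{n-1}\phi\circ f^k\to\alpha$ uniformly. If this failed, there would be $\varepsilon>0$, integers $n_j\to\infty$, and points $x_j\in K$ with $\bigl|\frac1{n_j}\sum_{k=0}^{n_j-1}\phi(f^kx_j)-\alpha\bigr|\ge\varepsilon$. Form the empirical measures $\mu_j:=\frac1{n_j}\sum_{k=0}^{n_j-1}\delta_{f^kx_j}$, which are supported on $K$ since $K$ is $f$-invariant. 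By weak-$*$ compactness of the space of probability measures on $K$, pass to a subsequence with $\mu_j\rightharpoonup\mu$. For each $g\in C(K)$ one has $\bigl|\int g\circ f\,d\mu_j-\int g\,d\mu_j\bigr|=\frac1{n_j}\bigl|g(f^{n_j}x_j)-g(x_j)\bigr|\le\frac{2\|g\|_\infty}{n_j}\to0$, so passing to the limit shows $\mu$ is $f$-invariant. But then $\alpha=\int\phi\,d\mu=\lim_j\int\phi\,d\mu_j$, contradicting $\bigl|\int\phi\,d\mu_j-\alpha\bigr|\ge\varepsilon$. (Equivalently, $a_n:=\max_{x\in K}\sum_{k=0}^{n-1}\phi(f^kx)$ is subadditive, so $a_n/n\to\inf_n a_n/n$, which the same empirical-measure argument identifies with $\max_\mu\int\phi\,d\mu=\alpha$; applying this also to $-\phi$ yields the two-sided uniform bound.)

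Finally I would combine the two steps: the hypothesis $\sigma(f,K)=\{\alpha\}$ says precisely that $\int\phi\,d\mu=\alpha$ for every $f$-invariant measure supported on $K$, so by the general fact $\frac1n\sum_{k=0}^{n-1}\phi\circ f^k\to\alpha$ uniformly on $K$, and hence $h_n\to\alpha$ uniformly on $\pi^{-1}(K)$. In particular the defining limit \eqref{rot-0} for $\rho(\widetilde f,\widetilde x,P)$ exists and equals $\alpha$ for every $\widetilde x\in\pi^{-1}(K)$, so $\rho(f,x,A)=\alpha\pmod 1$ for all $x\in K$. The step I expect to require the most care is the passage from the failure of uniform convergence to an $f$-invariant measure that violates the rotation constraint — i.e.\ the Krylov--Bogolyubov verification that the weak-$*$ limit of the empirical measures is invariant and still supported on $K$ — together with the easy but essential observation that the displacement cocycle $\phi_0$ descends to a \emph{continuous} function on the compact set $K$, so that these weak-$*$ arguments apply at all; everything else is formal.
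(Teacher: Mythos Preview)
Your argument is correct and is essentially the standard proof of this fact. Note, however, that the paper does not actually supply a proof: it simply quotes this proposition as \cite[Corollary 3.1]{FLC}. So there is nothing in the paper to compare your approach against beyond the citation.

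For the record, the route taken in \cite{FLC} is the same as yours in spirit: one rewrites the displacement quotient as the $n$th Birkhoff average of the continuous cocycle $\phi$ on the compact $f$-invariant set $K$, and then uses the Krylov--Bogolyubov/weak-$*$ compactness argument to show that if the Birkhoff averages failed to converge uniformly to $\alpha$, an accumulation point of the empirical measures would be an $f$-invariant probability on $K$ with $\int\phi\,d\mu\neq\alpha$, contradicting $\sigma(f,K)=\{\alpha\}$. Your write-up handles the two small bookkeeping points that matter here --- that $\phi_0=p_1\circ\widetilde f-p_1$ really is $T$-invariant (so descends to a \emph{continuous} $\phi$ on $A$), and that the empirical limit measure stays supported on $K$ because $K$ is closed and $f$-invariant --- so there is no gap.
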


The following  theorem of Franks--Le Calvez \cite[Proposition 5.4]{FLC}  is a generalization of the Poincar\'e-Birkhoff Theorem.
\begin{thm}\label{PB}
If $f$ is area-preserving and $K$ is an annular continuum, then every rational number in $\sigma(f,K)$ is realized by a periodic point in $K$.
\end{thm}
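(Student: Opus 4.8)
The plan is to recast the assertion as a fixed-point problem for a single lift and then exploit the area-preserving hypothesis through recurrence, combined with Brouwer-theoretic (Franks--Le Calvez) machinery. First I would normalize the geometry. Since $K$ is an annular continuum, $A-K$ has exactly two components $U_L$ and $U_R$, both essential open annuli; compactifying $A$ to a sphere by adjoining ideal endpoints $p_L,p_R$ as in Lemma~\ref{separator}, the sets $U_L\cup\{p_L\}$ and $U_R\cup\{p_R\}$ become disjoint $f$-invariant open topological disks with common complement $K$. Fix a rational $p/q\in\sigma(f,K)$ in lowest terms, fix a lift $\widetilde f$ of $f$ (the one implicit in the definition of $\phi$), and set $F=\widetilde f^{\,q}\circ T^{-p}$; this is a lift of $f^{q}$ preserving $\widetilde K:=\pi^{-1}(K)$. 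A fixed point of $F$ lying in $\widetilde K$ projects to a $q$-periodic point of $f$ in $K$ with rotation number $p/q$, so it is enough to produce one. I would also record the elementary identity $p_1\circ F-p_1=(\Phi_q-p)\circ\pi$ on $\widetilde A$, where $\Phi_q:=\sum_{i=0}^{q-1}\phi\circ f^{i}$, together with $\int_K(\Phi_q-p)\,d\mu=q\,\sigma(f,\mu)-p$ for every $f$-invariant probability measure $\mu$ on $K$.

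Now I would argue by contradiction, assuming $\Fix(F)\cap\widetilde K=\varnothing$, and extract recurrence from area-preservation. If $p/q$ is an endpoint of the interval $\sigma(f,K)$, then $\{\mu:\sigma(f,\mu)=p/q\}$ is a face of the Choquet simplex of $f$-invariant measures on $K$, so it contains an ergodic $\mu$; a short computation (using that $\phi\circ f^{q}-\phi$ integrates to $0$ against every $f^{q}$-invariant measure) shows that each $f^{q}$-ergodic component of $\mu$ still has vanishing $(\Phi_q-p)$-mean, and hence Atkinson's lemma on recurrence of mean-zero integrable cocycles applies to the $\mathbb{R}$-extension of $f^{q}$ by $\Phi_q-p$: for $\mu$-a.e.\ $z\in K$ there is a subsequence $n_k\to\infty$ with $f^{qn_k}(z)\to z$ and $\sum_{i=0}^{qn_k-1}\phi(f^{i}z)-pn_k\to 0$. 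By the displayed identity these two facts say precisely that $F^{\,n_k}(\widetilde z)\to\widetilde z$ for the lift $\widetilde z\in\widetilde K$ of $z$, so $F$ has a recurrent point inside $\widetilde K$. If instead $p/q$ lies in the interior of $\sigma(f,K)$, I would pick ergodic measures $\nu_\pm$ on $K$ with $\sigma(f,\nu_-)<p/q<\sigma(f,\nu_+)$ and use Poincar\'e recurrence for $f^{q}$ to manufacture, near $\widetilde K$, a positively returning disk for $F$ (from $\nu_+$) and a negatively returning disk for $F$ (from $\nu_-$).

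Finally I would close the argument with Le Calvez's equivariant version of Brouwer's plane translation theorem: since $F$ is an orientation-preserving homeomorphism of $\widetilde A\cong\mathbb{R}^{2}$, there is a $T$-invariant singular foliation of $\widetilde A-\Fix(F)$ transverse to $F$, and (by the forcing theory attached to such foliations) it can carry neither a recurrent orbit nor a pair of returning disks of opposite sign; this is incompatible with the objects produced above, all of which lie in $\widetilde K\subset\widetilde A-\Fix(F)$. The contradiction forces $\Fix(F)\cap\widetilde K\neq\varnothing$, which is what we wanted. Pleasantly, one never has to locate a fixed point outside $K$: excising the whole of $\Fix(F)$ from the domain of the foliation is exactly what the equivariant Brouwer theorem allows. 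I expect the transverse-foliation step to be the main obstacle---turning ``no recurrent orbit / no opposite-sign returning disks'' into a clean contradiction is where the full strength of Le Calvez's forcing theory is really needed, and handling the endpoint and interior cases uniformly also requires care; by contrast, the structure theory of annular continua recalled above and the ergodic-theoretic recurrence are standard.
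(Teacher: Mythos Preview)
The paper does not prove this statement at all: it is quoted verbatim as \cite[Proposition 5.4]{FLC} (Franks--Le Calvez) and used as a black box. So there is no ``paper's own proof'' to compare against; the relevant question is only whether your sketch is a faithful outline of the Franks--Le Calvez argument.

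Your outline is broadly in the right spirit---reduce to a fixed point of $F=\widetilde f^{\,q}T^{-p}$ on $\widetilde K$, use ergodic-theoretic recurrence (Atkinson at an endpoint, opposite-sign returning disks in the interior), and close with a Brouwer/Le Calvez obstruction---and this is indeed the philosophy behind the Franks--Le Calvez result. Two caveats are worth flagging. First, the endpoint case is more delicate than you indicate: Atkinson's lemma gives recurrence of the $\mathbb{R}$-extension for an \emph{ergodic} system, and your passage from an $f$-ergodic $\mu$ with $\sigma(f,\mu)=p/q$ to an $f^{q}$-ergodic component with the same mean is correct, but you should say why some such $\mu$ is supported on $K$ (this follows from ergodic decomposition of the Choquet simplex of invariant measures on $K$, since $p/q$ is an extreme value of the linear functional $\sigma(f,\cdot)$). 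Second, the final step is not literally Le Calvez's equivariant foliation theorem alone: what you need is Franks's lemma that a fixed-point-free orientation-preserving plane homeomorphism admits no free disk chain returning with both signs, together with the observation that a recurrent point yields such a chain. Invoking ``forcing theory'' here is heavier machinery than necessary and obscures where the contradiction actually comes from. With those adjustments your sketch matches the standard route; for the purposes of this paper, however, a bare citation to \cite{FLC} is what is expected.
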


The theory of prime ends is an important tool in the study of 2-dimensional dynamics which can be used to transform a 2-dimensional problem into a 1-dimensional problem. Recall that we assume that $A$ is an open annulus embedded in a Riemann surface $S$.  Suppose that $f$ is a homeomorphism of $S$ which leaves $A$ invariant. Furthermore, let $K\subset A$ be an annular continuum and  suppose that $f$  leaves $K$ invariant. Then both $A_L(K)$ and $A_R(K)$ are $f$ invariant. 

Since $A$ is embedded in $S$, we can define the frontiers of $A$, $A_L(K)$, and $A_R(K)$. By Carath\'eodory's theory of prime ends (see, e.g., \cite[Chapter 15]{Milnor}), the homeomorphism $f$   yields an action on the frontiers of $A_L(K)$ and $A_R(K)$.  Consider the right hand frontier of $A_L(K)$ (the one which is contained in $A$). Then the set of prime ends on this frontier is homeomorphic to the circle, and we denote by  $f_L$ the induced homeomorphism of this circle. Likewise, the set of prime ends on left hand  frontier of $A_R(K)$  is homeomorphic to the circle, and we denote by  $f_R$ the induced homeomorphism this circle. 

The rotation number of a circle homeomorphism   (defined by Equation \eqref{rot}), is well defined everywhere and is the same number for any point on the circle. The rotation numbers of $f_L$ and $f_R$ are called $r_L$ and $r_R$. We refer to them as the left and right prime end rotation numbers of $f$. We have the following theorem of Matsumoto \cite{Mat}.
\begin{thm}[Matsumoto's theorem]\label{Matsumoto}
If $K$ is an annular continuum, then its left and right
prime ends rotation numbers $r_L,r_R$ belong to the rotation interval $\sigma(f, K)$.
\end{thm}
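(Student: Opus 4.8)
The plan is to prove $r_L\in\sigma(f,K)$; the statement for $r_R$ is identical after interchanging the two ends of $A$. I would produce an $f$-invariant Borel probability measure supported on $K$ whose rotation number is exactly $r_L$, obtained by transporting an invariant measure from the prime-end circle to the frontier of $A_L(K)$. Set $U=A_L(K)$, an $f$-invariant open annulus with $\fr_A(U)\subset K$ and $f(\fr_A(U))=\fr_A(U)$. Pass to the prime-end compactification $\widehat U$ of $U$: it is a closed annulus, one of whose boundary circles is exactly the circle $b$ of prime ends on the side of $K$ considered in the text, and $f|_U$ extends to a homeomorphism $\widehat f$ of $\widehat U$ with $\widehat f|_b=f_L$ (see \cite[Ch.\ 15]{Milnor}). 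Fix a lift $\widetilde f$ of $f$ to $\widetilde A$; it is compatible with a lift of $\widehat f$ to the universal cover $\widetilde{\widehat U}$ (a closed strip with deck transformation $T$, carrying a ``wrap'' coordinate $q\colon\widetilde{\widehat U}\to\mathbb R$ extending the first coordinate of $\widetilde U$) and with the lift $\widetilde f_L$ of $f_L$ to $\widetilde b=\mathbb R$ for which $r_L=\rho(\widetilde f_L)$. Since $\fr_A(U)$ is compact, $q$ and $p_1$ differ by a uniformly bounded amount over the preimage in $\widetilde U$ of a fixed compact neighborhood of $\fr_A(U)$.

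For the construction, choose an $f_L$-invariant probability measure $\nu$ on $b$; then $\int_b\psi\,d\nu=r_L$, where $\psi\colon b\to\mathbb R$ is the displacement cocycle of $\widetilde f_L$. For each prime end $\mathfrak p$ its principal set $\Pi(\mathfrak p)$ — the common set of accumulation points of the crosscut chains defining $\mathfrak p$ — is a nonempty compact subset of $\fr_A(U)\subset K$, so by a measurable-selection theorem one may pick a Borel map $s\colon b\to\fr_A(U)$ with $s(\mathfrak p)\in\Pi(\mathfrak p)$. Put $\mu_N=\tfrac1N\sum_{k=0}^{N-1}f^{k}_\ast s_\ast\nu$ and let $\mu$ be a weak-$\ast$ subsequential limit. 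By the Krylov--Bogolyubov argument $\mu$ is $f$-invariant, and since $f^{k}(\fr_A(U))=\fr_A(U)$ for all $k$, the measure $\mu$ is supported on $\fr_A(U)\subset K$. It remains to check that $\sigma(f,\mu)=\int_K\phi\,d\mu$ equals $r_L$, where $\phi=p_1\circ f-p_1$.

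The heart of the matter is that for $x=s(\mathfrak p)$ the total wrap of its $f$-orbit over $N$ steps, namely $p_1(\widetilde f^{\,N}\widetilde x)-p_1(\widetilde x)$, differs from the total wrap of the prime-end orbit, $\widetilde f_L^{\,N}(\widetilde{\mathfrak p})-\widetilde{\mathfrak p}$, by a constant independent of $N$ and $\mathfrak p$ (for consistently chosen lifts). Indeed $x$ is a principal point of $\mathfrak p$, so there are crosscut neighborhoods $V_n$ of $\mathfrak p$ in $\widehat U$ with diameter tending to $0$ and with $x\in\overline{V_n}$ for all large $n$; applying $\widehat f^{\,N}$ and using uniform continuity of $\widehat f^{\,N}$ on the compact annulus $\widehat U$, the sets $\widehat f^{\,N}(V_n)$ are crosscut neighborhoods of $\widehat f^{\,N}(\mathfrak p)$ of diameter tending to $0$ containing $f^{N}(x)$; lifting adjacent to the lift of $\widehat f^{\,N}(\mathfrak p)$ shows that $q$ stays within a fixed constant of $\widetilde f_L^{\,N}(\widetilde{\mathfrak p})$ on these lifted sets, and combining with the bound on $|q-p_1|$ near $\fr_A(U)$ gives the comparison. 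Therefore $\bigl|\int_K\phi\,d\mu_N-r_L\bigr|=\bigl|\tfrac1N\int_b(\text{difference of total wraps})\,d\nu\bigr|\le C/N\to 0$; since $\phi$ is continuous and all the measures live on the compact set $\fr_A(U)$, passing to the limit gives $\int_K\phi\,d\mu=r_L$, i.e. $r_L\in\sigma(f,K)$.

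I expect the main obstacle to be precisely this prime-end bookkeeping: that principal sets are nonempty, compact, and admit a Borel selection into $\fr_A(U)$; that a crosscut chain defining a prime end, together with all of its forward iterates, has lifts of uniformly bounded diameter; and that the two wrap coordinates on $\widetilde{\widehat U}$ and $\widetilde A$ agree up to a constant near $\fr_A(U)$. Once these topological facts are nailed down the rotation-number computation is essentially formal. As a remark, when $r_L=p/q$ one may instead take $\nu$ supported on a periodic orbit of $f_L$ (Poincar\'e--Birkhoff on the circle $b$), and a Cartwright--Littlewood/Barge--Gillette-type fixed-point argument even upgrades the conclusion to an honest periodic point of period $q$ inside $K$ realizing $p/q$; and if $f$ is assumed area-preserving the rational case can alternatively be fed through Theorem \ref{rotationproperty}, though the statement as given requires no such hypothesis.
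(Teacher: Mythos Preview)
The paper does not prove this statement at all; it is quoted as a theorem of Matsumoto and cited to \cite{Mat}. So there is no in-paper argument to compare your proposal to.

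Your outline is the right strategy and matches the spirit of the standard proof: transport an $f_L$-invariant probability from the prime-end circle $b$ to $\fr_A(U)\subset K$ via a Borel selection of principal points, Ces\`aro-average to obtain an $f$-invariant measure $\mu$ on $K$, and show $\sigma(f,\mu)=r_L$. The observation that $f^N(x)$ is a principal point of $f_L^{\,N}(\mathfrak p)$ whenever $x\in\Pi(\mathfrak p)$ is correct and is exactly what makes the comparison of the two orbit-wraps possible. Once the wrap comparison is in hand, the Krylov--Bogolyubov step and the limit $\int\phi\,d\mu_N\to r_L$ are routine, as you say.

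The one place where your write-up is too quick is the sentence ``Since $\fr_A(U)$ is compact, $q$ and $p_1$ differ by a uniformly bounded amount over the preimage in $\widetilde U$ of a fixed compact neighborhood of $\fr_A(U)$.'' Compactness of $\fr_A(U)$ alone does not give this: the difference $q-p_1$ descends to a continuous function on $U$, but the relevant neighborhood $W\cap U$ is not compact in $U$ (it accumulates on the end of $U$ facing $K$), and $q$ need not extend continuously to $\overline U\subset A$ when the frontier is wild. What one actually proves is the lemma you isolate later: for every prime end $\mathfrak p$ and every principal point $x\in\Pi(\mathfrak p)$, compatible lifts satisfy $|p_1(\widetilde x)-\widetilde{\mathfrak p}\,|\le C$ with $C$ independent of $\mathfrak p$. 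This is done by working with \emph{accessible} prime ends (which are dense in $b$ and for which an end-arc in $U$ lands at a single point of $\fr_A(U)$), comparing the two angular coordinates along such an arc, and then passing to limits for general prime ends via crosscut chains; this is precisely the ``prime-end bookkeeping'' you flag, and it is the genuine content of Matsumoto's argument rather than a formality. With that lemma established, the rest of your proposal goes through.
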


\section{Minimal decompositions and characteristic annuli}

\subsection{Minimal decompositions} 
We recall the theory of minimal decompositions of surface homeomorphisms. This is established in \cite{Mar}. 
Firstly we recall the upper semi-continuous decomposition of a surface; see also Markovic \cite[Definition 2.1]{Mar}. Let $M$ be a surface. 
\begin{defn}[Upper semi-continuous decomposition]
Let $\mathbf{S}$ be a collection of closed, compact, connected subsets of $M$. We say that $\mathbf{S}$ is an upper semi-continuous decomposition of $M$ if the following holds:
\begin{itemize}
\item If $S_1,S_2\in \mathbf{S}$, then $S_1\cap S_2=\emptyset$.
\item If $S\in \mathbf{S}$, then $E$ does not separate $M$; i.e., $M-S$ is connected.
\item We have $M=\bigcup_{S\in \mathbf{S}} S$.
\item If $S_n\in \mathbf{S}, n\in \mathbb{N}$ is a sequence that has the Hausdorff limit equal to $S_0$ then there exists $S\in\mathbf{S}$ such that $S_0\subset S$.
\end{itemize}
\end{defn}
Now we define acyclic sets on a surface.
\begin{defn}[Acyclic sets]
Let $S\subset M$ be a closed, connected subset of $M$ which does not separate $M$. We say that $S$ is \emph{acyclic} if there is a simply connected open set $U\subset M$ such that $S \subset U$ and $U-S$ is homeomorphic to an annulus.
\end{defn}
The simplest examples of acyclic sets are a point, an embedded closed arc and an embedded closed disk in $M$. Let $S\subset M$ be a closed, connected set that does not separate M. Then $S$ is acyclic if and only if there is a lift of $S$ to the universal cover $\widetilde{M}$ of $M$, which is a compact subset of $\widetilde{M}$. 
The following theorem is a classical result called Moore's theorem; see, e.g., \cite[Theorem 2.1]{Mar}. 
\begin{thm}[Moore's theorem]\label{moore}
Let $M$ be a surface and $\mathbf{S}$ be an upper semi-continuous decomposition of $M$ so that every element of $\mathbf{S}$ is acyclic. Then there is a continuous map $\phi:M\to M$ that is homotopic to the identity map on $M$ and such that for every $p\in M$, we have $\phi^{-1}(p)\in \mathbf{S}$. Moreover  $\mathbf{S}=\{\phi^{-1}(p)|p\in M\}$.
\end{thm}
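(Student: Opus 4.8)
The plan is to realize $\phi$ as a composition $\phi=h\circ q$, where $q\colon M\to M/\mathbf{S}$ is the quotient map and $h\colon M/\mathbf{S}\to M$ is a suitably chosen homeomorphism. Three things then have to be checked: that $M/\mathbf{S}$ is again a surface, that it is homeomorphic to $M$, and that $h$ can be chosen so that $h\circ q$ is homotopic to the identity. The first of these is the hard one. Upper semi-continuity of $\mathbf{S}$, together with the fact that its members are compact, makes $q$ a closed, monotone quotient map onto a second countable metrizable (in particular Hausdorff) space; that $M/\mathbf{S}$ is a $2$-manifold is then a local question. Given $S\in\mathbf{S}$, acyclicity provides a simply connected open $U\subset M$ with $S\subset U$ and $U\setminus S$ an annulus. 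Shrinking to a saturated open neighborhood $W$ of $S$ with $\overline W\subset U$ and fixing an embedding of $U$ as an open disk in $S^2$, the members of $\mathbf{S}$ lying in $W$ form an upper semi-continuous decomposition of $W$ into cell-like (equivalently, non-separating) subcontinua of $S^2$, which extends to such a decomposition of all of $S^2$ by adjoining singletons. The classical planar form of Moore's theorem applies to this decomposition and shows that $q(W)$ is an open disk; these disk charts are automatically compatible on overlaps, so they equip $M/\mathbf{S}$ with a surface structure (boundary points of $M$ being handled the same way via half-disk charts). I expect this step to be the main obstacle: upgrading ``well-behaved quotient'' to ``topological manifold'' is precisely the content of Moore's theorem, and it rests on the planar case together with the local bookkeeping just indicated.

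Next I would identify $M/\mathbf{S}$ with $M$. Every point-preimage of $q$ is acyclic, hence lifts to a compact subset of the universal cover, hence is cell-like; since $q$ is also proper (closed with compact fibers), it is a proper cell-like map and therefore a proper homotopy equivalence. Thus $M/\mathbf{S}$ has the homotopy type of $M$ and shares its ends, its boundary components and its orientability type, so by the classification of surfaces there is a homeomorphism $h\colon M/\mathbf{S}\to M$.

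Finally, set $\phi=h\circ q$. This is continuous, and for every $p\in M$ we have $\phi^{-1}(p)=q^{-1}(h^{-1}(p))\in\mathbf{S}$, with $\mathbf{S}=\{\phi^{-1}(p)\mid p\in M\}$ because $q$ is surjective and $h$ is a bijection. It remains only to arrange $\phi\simeq\mathrm{id}_M$. Being the composition of a homotopy equivalence with a homeomorphism, $\phi$ is a homotopy equivalence of $M$; let $g$ be a homeomorphism of $M$ homotopic to a homotopy inverse of $\phi$. Such a $g$ exists because every homotopy equivalence of a surface is homotopic to a homeomorphism (Dehn--Nielsen--Baer and its analogues with boundary or marked points; in the simply connected cases, e.g. $M=D^2$, this is immediate since then every self-map is homotopic to the identity). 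Replacing $h$ by $g\circ h$ replaces $\phi$ by $g\circ\phi\simeq\mathrm{id}_M$ without disturbing $\phi^{-1}(p)\in\mathbf{S}$ or $\mathbf{S}=\{\phi^{-1}(p)\mid p\in M\}$, and this completes the proof.
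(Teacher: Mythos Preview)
The paper does not prove this statement at all: it is quoted as a classical result and the reader is referred to \cite[Theorem 2.1]{Mar}. So there is no ``paper's own proof'' to compare your attempt against.

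That said, your outline is a sound way to organize a proof of the surface version of Moore's theorem. You correctly isolate the nontrivial step --- showing that $M/\mathbf{S}$ is a $2$-manifold --- and your local reduction (take a saturated disk neighborhood, embed it in $S^2$, extend the decomposition by singletons, and invoke the planar Moore theorem) is the standard mechanism for doing so. The remaining steps (cell-like $\Rightarrow$ proper homotopy equivalence, classification of surfaces, Dehn--Nielsen--Baer to arrange $\phi\simeq\mathrm{id}$) are the right ingredients. Two places where a referee would ask for more care: first, when you pass from ``each $S'\in\mathbf{S}$ is acyclic in $M$'' to ``each $S'\subset W$ is non-separating in the disk $U$'', you are implicitly using that acyclic continua are cell-like and hence do not separate any disk containing them --- worth making explicit. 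Second, for non-compact $M$ the appeal to ``classification of surfaces'' and to Dehn--Nielsen--Baer needs the proper/ends-preserving versions (Ker\'ekj\'art\'o--Richards classification, and the fact that a proper homotopy equivalence of open surfaces is properly homotopic to a homeomorphism); your parenthetical about ends suggests you are aware of this, but it should be stated rather than gestured at.
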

We call the map $M\to M/\sim$ the \emph{Moore map} where $x\sim y$ if and only if $x,y\in S$ for some $S\in \mathbf{S}$. The following definition is \cite[Definition 3.1]{Mar}
\begin{defn}[Admissible decomposition]
Let $\mathbf{S}$ be an upper semi-continuous decomposition of $M$. Let $G$ be a subgroup of $\Homeo(M)$. We say that $\mathbf{S}$ is admissible for the group $G$ if the following holds:
\begin{itemize}
\item Each $f\in G$ preserves setwise every element of $\mathbf{S}$.
\item Let $S\in \mathbf{S}$. Then every point, in every frontier component of the surface $M-S$ is a limit of points from $M-S$ which belong to acyclic elements of $\mathbf{S}$. 
\end{itemize}
If $G$ is a cyclic group generated by a homeomorphism $f:M\to M$ we say that $\mathbf{S}$ is an admissible decomposition of $f$.
\end{defn}
An admissible decomposition for $G<\Homeo(M)$ is called \emph{minimal} if it is contained in every admissible decomposition for $G$. We have the following theorem \cite[Theorem 3.1]{Mar}.
\begin{thm}[Existence of minimal decompositions]
Every group $G<\Homeo(M)$ has a unique minimal decomposition.
\end{thm}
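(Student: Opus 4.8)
Write $\mathcal{A}$ for the set of admissible decompositions of $M$ for $G$, and for $\mathbf{S}\in\mathcal{A}$, $x\in M$ let $\mathbf{S}(x)$ denote the element of $\mathbf{S}$ containing $x$. Order $\mathcal{A}$ by $\mathbf{S}\preceq\mathbf{S}'$ iff every element of $\mathbf{S}$ is contained in an element of $\mathbf{S}'$; ``$\mathbf{S}$ is contained in $\mathbf{S}'$'' in the definition means $\mathbf{S}\preceq\mathbf{S}'$, so a minimal decomposition is precisely a minimum of $(\mathcal{A},\preceq)$. \emph{Uniqueness} is then formal: $\preceq$ is a partial order (antisymmetry: if $S\subset S'\subset S''$ with $S,S''\in\mathbf{S}$ and $S'\in\mathbf{S}'$, then disjointness of the elements of $\mathbf{S}$ forces $S=S''$, hence $S=S'\in\mathbf{S}'$), so a minimum, if it exists, is unique. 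Also $\mathcal{A}\neq\emptyset$ (for instance the one-element decomposition $\{M\}$ is admissible: $M$ does not separate $M$, the Hausdorff-limit axiom is automatic, $G$ preserves $M$ setwise, and the frontier/acyclicity condition is vacuous since $M-M$ has no frontier component).

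For \emph{existence} I would build the minimum as a componentwise intersection of all of $\mathcal{A}$: set
\[
\Sigma(x):=\text{the connected component of }x\text{ in }\bigcap_{\mathbf{S}\in\mathcal{A}}\mathbf{S}(x),\qquad \mathbf{S}_{\min}:=\{\Sigma(x)\colon x\in M\}.
\]
Then $\Sigma(x)\subset\mathbf{S}(x)$ for every $\mathbf{S}$, so $\mathbf{S}_{\min}\preceq\mathbf{S}$ for all admissible $\mathbf{S}$; the task is to show $\mathbf{S}_{\min}\in\mathcal{A}$. The mechanism is the claim that $(\mathcal{A},\preceq)$ is \emph{downward directed}. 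Granting this, $\{\mathbf{S}(x)\}_{\mathbf{S}\in\mathcal{A}}$ is a downward-directed family of compacta, so $\bigcap_{\mathbf{S}}\mathbf{S}(x)$ is nonempty, compact and connected (a directed intersection of continua in $M$, by a finite-intersection-property argument), hence equals $\Sigma(x)$, while its complement $\bigcup_{\mathbf{S}}(M-\mathbf{S}(x))$ is a directed union of connected open sets and is therefore connected, so $\Sigma(x)$ is non-separating. Disjointness and covering of $\mathbf{S}_{\min}$ are immediate ($z\in\Sigma(x)\cap\Sigma(y)$ forces $\mathbf{S}(x)=\mathbf{S}(y)$ for every $\mathbf{S}$, whence $\Sigma(x)=\Sigma(y)$). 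Upper semi-continuity follows from a limiting argument: if $\Sigma(x_n)\to C_0$ in the Hausdorff metric, then (passing to a subsequence) $x_n\to x_0\in C_0$, and for each fixed $\mathbf{S}$ the Hausdorff-limit axiom for $\mathbf{S}$ applied along $\mathbf{S}(x_n)$ gives $C_0\subset\mathbf{S}(x_0)$, so $C_0\subset\bigcap_{\mathbf{S}}\mathbf{S}(x_0)$, and being a continuum containing $x_0$ it lies in $\Sigma(x_0)$. Two further points still need arguments: that each $\Sigma(x)$ is $G$-invariant (each $f\in G$ fixes $\bigcap_{\mathbf{S}}\mathbf{S}(x)$ setwise, but a priori could move $x$ to a different component of it), and that $\mathbf{S}_{\min}$ satisfies the frontier/acyclicity axiom (one approximates a frontier point of $M-\Sigma(x)$ using acyclic elements of a sufficiently fine $\mathbf{S}\in\mathcal{A}$, together with the fact that a closed connected subset of an acyclic set is acyclic).

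It remains to prove directedness: given $\mathbf{S}_1,\mathbf{S}_2\in\mathcal{A}$, produce $\mathbf{S}_3\in\mathcal{A}$ with $\mathbf{S}_3\preceq\mathbf{S}_1$ and $\mathbf{S}_3\preceq\mathbf{S}_2$. The first attempt is to take for the elements of $\mathbf{S}_3$ the connected components of the sets $S_1\cap S_2$, $S_i\in\mathbf{S}_i$: disjointness, covering, compactness, connectedness and upper semi-continuity all go through as above. But the naive $\mathbf{S}_3$ fails two requirements. First, an $f\in G$ fixes $S_1$ and $S_2$ setwise yet may \emph{permute} the components of $S_1\cap S_2$, so ``$f$ preserves every element setwise'' can fail; this is repaired by re-coarsening, merging components along their $G$-orbits (which stay inside the compactum $S_1\cap S_2$) so as to obtain the finest $G$-invariant upper semi-continuous decomposition refining the componentwise one, after which every axiom must be re-verified. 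Second, a component of $S_1\cap S_2$ need not be non-separating even though $S_1$ is; one excludes this for acyclic pieces by passing to the universal cover (acyclic sets lift to compact sets, so each component of $S_1\cap S_2$ lifts to a compact set, is acyclic, and hence non-separating), and for essential pieces by a surface-topological analysis of how two admissible pieces can intersect, using the frontier/acyclicity axiom and Moore's theorem. Deducing the frontier/acyclicity axiom for $\mathbf{S}_3$ from those of $\mathbf{S}_1,\mathbf{S}_2$ is of the same nature.

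\emph{Main obstacle.} The substance of the proof is the directedness step, and within it the recovery of $G$-invariance after intersecting: one must show that the merging of components along $G$-orbits can be carried out while preserving upper semi-continuity, non-separation of elements, and the frontier/acyclicity axiom — this is exactly where the structure theory of upper semi-continuous surface decompositions (Moore's theorem, universal-cover arguments) is used in an essential way. A secondary technicality is to make sure all the Hausdorff-limit arguments are robust when $M$ is non-compact, where one must control limits of escaping sequences of elements.
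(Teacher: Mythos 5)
There is a genuine gap, and it sits exactly where you locate the ``main obstacle.'' Note first that the paper does not prove this statement at all: it is quoted verbatim from Markovic (\cite[Theorem 3.1]{Mar}), so there is no in-paper argument to match yours against; your proposal has to stand on its own. As it stands it does not: the uniqueness part and the soft topological verifications (directed intersections of continua are continua, directed unions of connected open complements are connected, the Hausdorff-limit argument) are fine, but everything is conditional on downward directedness of the family of admissible decompositions, i.e.\ on producing a common admissible refinement $\mathbf{S}_3\preceq\mathbf{S}_1,\mathbf{S}_2$, and that step is only described, not proved. Worse, the specific repair you propose for the failure of $G$-invariance is internally inconsistent with the definition of a decomposition: if $f\in G$ permutes the connected components of $S_1\cap S_2$, then the union of a $G$-orbit of such components is in general \emph{disconnected}, whereas every element of an upper semi-continuous decomposition must be connected. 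To restore connectedness you would have to adjoin connecting sets, and these cannot in general be taken inside $S_1\cap S_2$ (the orbit components may lie in different components of $S_1\cap S_2$), so the refinement property $\mathbf{S}_3\preceq\mathbf{S}_i$ — the whole point of the construction — is destroyed. So the ``re-coarsening'' move does not merely need re-verification of the axioms; in the stated form it cannot produce a decomposition at all, and no alternative mechanism is supplied.

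The same unresolved difficulty propagates to the other two items you flag as ``still need arguments.'' Without directedness, $\bigcap_{\mathbf{S}}\mathbf{S}(x)$ need not be connected, so your $\Sigma(x)$ is a proper component and the worry that $f\in G$ permutes components of the intersection is genuine (with directedness it would evaporate, since each $\mathbf{S}(x)$ is setwise $f$-invariant by the first admissibility axiom and the intersection would be a single continuum). Likewise the frontier/acyclicity axiom for $\mathbf{S}_{\min}$ and the non-separation of essential components of $S_1\cap S_2$ are exactly the places where Markovic's structure theory (Moore's theorem, lifting acyclic pieces to the universal cover, control of non-compact $M$) does real work, and your proposal defers all of it. In short: the skeleton (refinement order, uniqueness as a minimum, componentwise intersection) is a reasonable framing, but the theorem's actual content — that the admissible decompositions form a directed family, or equivalently that the intersection construction is again admissible and $G$-invariant — is not established, and the one concrete idea offered for the hardest sub-step (merging along $G$-orbits) fails on connectedness grounds. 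You should either carry out a correct common-refinement argument or follow Markovic's original proof rather than cite its conclusion implicitly.
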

Denote by $\mathbf{A}(G)$ the sub collection of acyclic sets from $\mathbf{S}(G)$. By a mild abuse of notation, we occasionally refer to  $\mathbf{A}(G)$ as a subset of $S_g$ (the union of all sets from $\mathbf{A}(G)$). To distinguish the two notions we do the following. When we refer to $\mathbf{A}(G)$ as a collection then we consider it as the collection of acyclic sets. When we refer to as a set (or a subsurface of $S_g$) we have in mind the other meaning.

We have the following result \cite[Proposition 2.1]{Mar}.
\begin{prop}\label{subsurface}
Every connected component of $\mathbf{A}(G)$ (as a subset of $S_g$)   is a subsurface of $M$ with finitely many ends. 
\end{prop}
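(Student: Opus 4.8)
The plan is to show that $\mathbf{A}(G)$ is an open subset of $M$; once this is known, each connected component is automatically an open subsurface, and it embeds in $S_g$, so it has finite genus and we are left only to bound the number of ends. Throughout I would use the characterization recorded above: a closed connected non-separating set $S\subset M$ is acyclic if and only if some lift of $S$ to $\widetilde{M}$ is compact, equivalently (by the definition of acyclicity) if and only if $S$ is contained in a simply connected open subset of $M$.

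First I would prove that $N:=M\setminus\mathbf{A}(G)=\bigcup\{S\in\mathbf{S}(G): S\text{ non-acyclic}\}$ is closed, which is exactly the openness of $\mathbf{A}(G)$. Suppose $x_n\to x$ with $x_n\in S_n$ for non-acyclic $S_n\in\mathbf{S}(G)$. By compactness of $M$, after passing to a subsequence the closed connected sets $S_n$ converge in the Hausdorff metric to a compact connected set $S_0$ with $x\in S_0$. By upper semicontinuity of $\mathbf{S}(G)$ there is $S\in\mathbf{S}(G)$ with $S_0\subset S$, and since elements of $\mathbf{S}(G)$ are pairwise disjoint, $S$ is the element containing $x$. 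It suffices to show $S$ is non-acyclic. If instead $S$ were acyclic, then so would be $S_0$ (a compact connected subset of a compact lift of $S$ is a compact lift of $S_0$), so $S_0$ would lie in some simply connected open set $U$; but then Hausdorff convergence forces $S_n\subset U$ for all large $n$, so each such $S_n$ has a compact lift and is acyclic, contradicting the choice of the $S_n$. Hence $S$ is non-acyclic, $x\in N$, and $N$ is closed.

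Now let $Y$ be a connected component of $\mathbf{A}(G)$; it is an open connected subsurface of $M$ of finite genus, with compact frontier $\partial Y=\overline{Y}\setminus Y\subset N$. The remaining task is to prove $Y$ has finitely many ends, and the ends are controlled by the components of $\partial Y$: every non-acyclic element of $\mathbf{S}(G)$ meeting $\partial Y$ has a non-compact lift, so the pieces of $\partial Y$ are genuinely essential and cannot be pushed into arbitrarily small disks. I would argue that if $\partial Y$ had infinitely many components, then inside pairwise disjoint neighborhoods of them one could select pairwise disjoint essential simple closed curves (as in the proof of Proposition \ref{sepprop}); since a family of pairwise disjoint, pairwise non-homotopic essential curves in the closed surface $S_g$ has size at most $3g-3$, infinitely many of these curves would be pairwise homotopic, hence nested inside a single embedded annulus. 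One then seeks a contradiction from the fact that $Y$ is connected, disjoint from all these continua, yet accumulates on each of them, whereas a connected set disjoint from a nested family of essential continua can accumulate on at most the two outermost members it lies between. With finitely many frontier components and finite genus, $Y$ is of finite topological type and in particular has finitely many ends.

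The soft part is the openness of $\mathbf{A}(G)$: it falls out of upper semicontinuity together with the ``compact lift'' dichotomy for acyclicity. The genuine obstacle is the last step, namely excluding that an acyclic component accumulates onto infinitely many nested essential continua. Upper semicontinuity alone does not forbid this configuration — a nested family of essential continua converging to another essential continuum is a perfectly legitimate upper semicontinuous decomposition — so the argument must bring in that $\mathbf{S}(G)$ is the \emph{minimal} admissible decomposition for $G$: one uses the admissibility hypothesis (every frontier point of $M\setminus S$, for $S$ non-acyclic, is approximated by acyclic elements) and minimality to pin down the local structure of the decomposition near $\partial Y$ and rule out the accumulation. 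This is where I expect the real work to lie.
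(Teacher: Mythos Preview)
The paper does not prove this proposition at all; it simply quotes it as \cite[Proposition~2.1]{Mar} and moves on. So there is no in-paper argument to compare against, and I can only assess your proposal on its own terms.

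Your first step, that $\mathbf{A}(G)$ is open, is correct on a closed surface: upper semicontinuity plus the compact-lift characterization of acyclicity forces the union of non-acyclic pieces to be closed, exactly as you argue. (One small remark: you need the ambient surface compact to extract the Hausdorff-convergent subsequence; in the paper's setting $M=S_g$ this is fine.)

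The second step is, as you yourself say, not a proof. Two concrete gaps are worth naming. First, the implication ``finitely many frontier components $\Rightarrow$ finitely many ends'' is not the triviality you treat it as: the frontier of $Y$ consists of non-acyclic continua which need not be curves, and already in the tame case where the frontier is a single non-separating simple closed curve one gets \emph{two} ends for one frontier component; for a wild continuum the count could in principle be worse, so you owe a genuine argument here, not just a sentence. Second, your sketch for bounding the number of frontier components (produce disjoint essential simple closed curves near each non-acyclic piece, invoke the $3g-3$ bound, then handle the nested-annulus case) is left unresolved, and you correctly note that upper semicontinuity alone cannot finish it. Invoking admissibility or minimality is a reasonable instinct, but the admissibility clause only says frontier points of non-acyclic pieces are approximated by acyclic ones; it does not by itself control the global topology of the non-acyclic locus, so more is needed than what you have written.

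In short: you have the right architecture and have honestly identified where the work is, but the proposal stops well short of a proof of the finite-ends assertion. Since the paper defers entirely to \cite{Mar}, that is where you should look for the missing argument.
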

\begin{lem}\label{finer}
For $H<G<\Homeo(M)$, we have that  $\mathbf{A}(G)\subset \mathbf{A}(H)$.\end{lem}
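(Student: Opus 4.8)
The plan is to combine two observations: first, that passing to a subgroup can only \emph{refine} the minimal decomposition, and second, that a closed, connected, non-separating subset of an acyclic set is itself acyclic. I read the statement with $\mathbf{A}(G)$ and $\mathbf{A}(H)$ regarded as subsets of $M$, as in Proposition~\ref{subsurface}; this is the relevant reading, since the corresponding statement for the \emph{collections} of acyclic sets is false in general (refining a decomposition can break an acyclic piece into several acyclic pieces, none of which equals the original).

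First I would establish that $\mathbf{S}(H)$ refines $\mathbf{S}(G)$, meaning that every element of $\mathbf{S}(H)$ is contained in an element of $\mathbf{S}(G)$. The key point is that any decomposition which is admissible for $G$ is automatically admissible for $H$: the first defining condition, that every group element preserve every piece setwise, only becomes weaker upon replacing $G$ by the subgroup $H$, and the second (frontier) condition makes no reference to the group at all. Hence $\mathbf{S}(G)$ is an admissible decomposition for $H$, so by minimality $\mathbf{S}(H)$ is contained in it, i.e.\ refines it. Since $\mathbf{S}(G)$ and $\mathbf{S}(H)$ are both partitions of $M$, this says precisely that for every $x \in M$, the element of $\mathbf{S}(H)$ through $x$ is contained in the element of $\mathbf{S}(G)$ through $x$.

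Now I would take an arbitrary point $x \in \mathbf{A}(G)$, so that the element $S \in \mathbf{S}(G)$ with $x \in S$ is acyclic, and let $S' \in \mathbf{S}(H)$ be the element with $x \in S'$; by the previous step $S' \subset S$. By the criterion recalled just after the definition of acyclic sets, $S$ admits a lift to the universal cover $\pi : \widetilde{M} \to M$ which is a compact subset of $\widetilde{M}$; concretely, $S$ is contained in a simply connected open set $U$, such a $U$ lifts homeomorphically to $\widetilde{M}$, and the copy $\widetilde{S}$ of $S$ inside one such lift $\widetilde{U}$ is compact. Then the copy $\widetilde{S'} \subset \widetilde{S}$ of $S'$ is a compact, connected lift of $S'$ (it is a component of $\pi^{-1}(S')$, being equal to the open-and-compact set $\pi^{-1}(S') \cap \widetilde{U}$). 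Since $S'$, being an element of a decomposition, is closed, connected and does not separate $M$, the criterion now gives that $S'$ is acyclic. Hence $x \in S' \in \mathbf{A}(H)$, so $x \in \mathbf{A}(H)$, and as $x$ was arbitrary this proves $\mathbf{A}(G) \subset \mathbf{A}(H)$.

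There is no serious obstacle in this argument; the only step that needs a little care is the implication ``$S$ acyclic and $S' \subset S$ closed, connected, non-separating $\Rightarrow$ $S'$ acyclic'', and that is exactly what the compact-lift description of acyclicity delivers once one checks that the sub-copy $\widetilde{S'}$ really is a connected component of $\pi^{-1}(S')$. The rest is a formal unwinding of the definitions of admissible and minimal decomposition.
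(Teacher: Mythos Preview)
Your argument is correct and follows the same route as the paper: the minimal decomposition $\mathbf{S}(G)$ is admissible for $H$, hence $\mathbf{S}(H)$ refines $\mathbf{S}(G)$, and therefore $\mathbf{A}(G)\subset\mathbf{A}(H)$. The paper's proof is a single sentence that leaves the last implication implicit, whereas you spell out, via the compact-lift criterion, why an $\mathbf{S}(H)$-piece contained in an acyclic $\mathbf{S}(G)$-piece is itself acyclic; this extra care is warranted and the verification is fine.
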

\begin{proof}  The inclusion $\mathbf{A}(G)\subset \mathbf{A}(H)$ is because that the minimal decomposition of $G$ is also an admissible decomposition of $H$ and the minimal decomposition of $H$ is finer than that of $G$. 
\end{proof}

\subsection{Lifting through hyper-ellpitic branched cover}
Denote by $S_{g;n,b}$ the surface of genus $g$ with $b$ boundary components and $n$ marked points. To make the analysis easier, we take the following hyper-elliptic $\mathbb{Z}/2$ branched covers 
\[
\pi_n: S=S_{\frac{n-1}{2};n,1}\to S_{0;n,1}    \text{ for $n$ odd or } \pi_n:S=S_{\frac{n}{2}-1;n,2}\to S_{0;n}\text{ for $n$ even}.\]
The cover is shown by the following figures. The hyperelliptic involution on $S$ is denoted by $\tau$.

\begin{figure}[H]
\minipage{0.45\textwidth}
  \includegraphics[width=\linewidth]{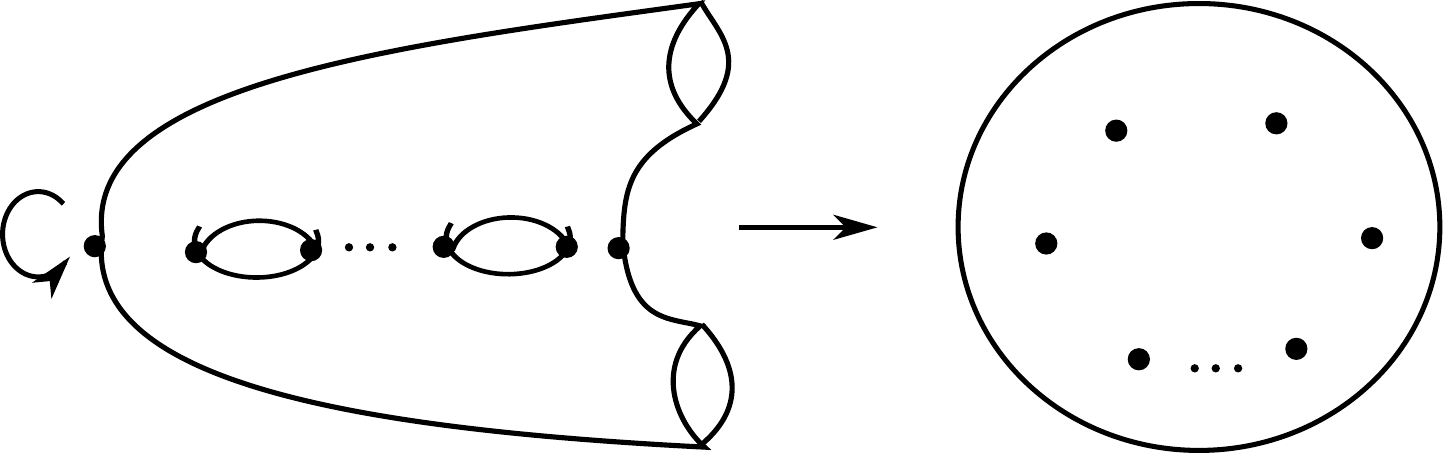}
  \caption{$n$ even}
  \label{p1}
\endminipage\hfill
\minipage{0.45\textwidth}
  \includegraphics[width=\linewidth]{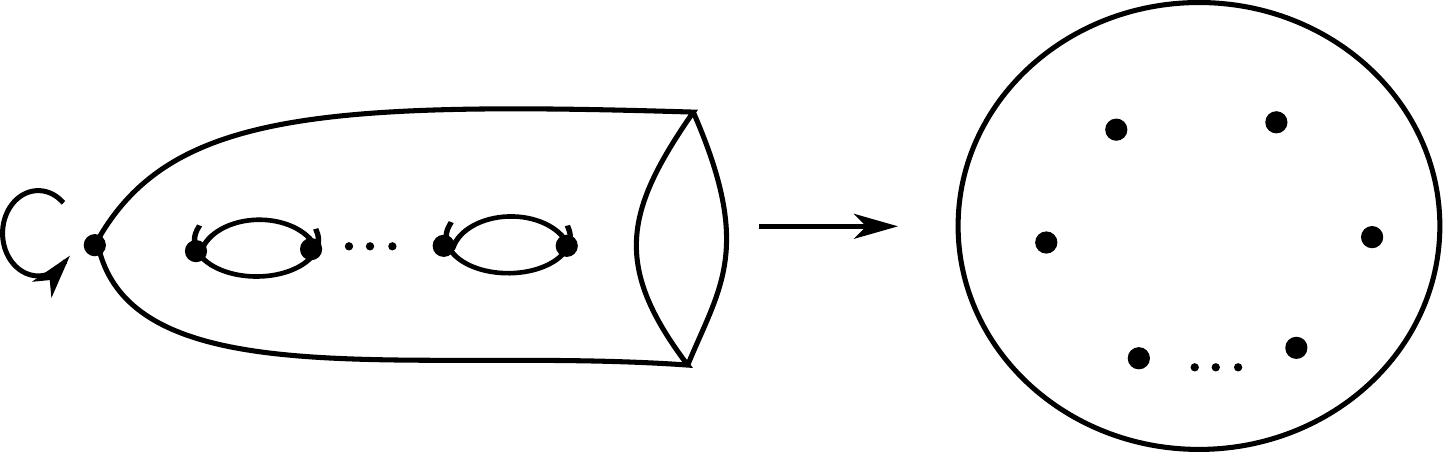}
  \caption{$n$ odd}
  \label{p2}
\endminipage\hfill
\end{figure}

Denote by $\widetilde{PB}_n$ the lifts of mapping classes under $\pi_n$, where it satisfies the following
\[
1\to \mathbb{Z}/2\to \widetilde{PB}_n\xrightarrow{L} PB_n\to 1.
\]
Let $c$ be a simple closed curve on $S_{0;n,1}$ and denote by $T_c$ the Dehn twist about $c$. For every simple closed curve $c$ on $S_{0;n,1}$, we have the following easy fact about its preimage under $\pi_n$.
\begin{fact}
\begin{enumerate}
\item If $c$ bounds odd number of points, then the lift is a single curve $c'$. The preimage of $T_c^2$ under $L$ are $T_{c'}$ and $T_{c'}\tau$. 
\item If $c$ bounds even number of points, then the lift is two curves $c_1,c_2$. The preimage of $T_c$ under $L$ are $T_{c_1}T_{c_2}$ and $T_{c_1}T_{c_2}\tau$. In particular, if  $c$ bounds $2$ points, then $c_1=c_2$.
\end{enumerate} 
\end{fact}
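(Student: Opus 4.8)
The claim is a "topological bookkeeping" fact about the hyperelliptic branched cover $\pi_n$, and the plan is to analyze it curve-by-curve using the deck group $\langle\tau\rangle\cong\mathbb{Z}/2$ and the standard theory of lifting mapping classes through a branched cover. First I would settle the \emph{number of components of the preimage} of a simple closed curve $c\subset S_{0;n,1}$: since $\pi_n$ is a double cover branched over the $n$ marked points, the curve $c$ lifts to a connected curve precisely when the loop $c$ is nontrivial in $H_1$ of the complement of the branch points with $\mathbb{Z}/2$ coefficients, i.e.\ when $c$ encircles an odd number of marked points, and otherwise $\pi_n^{-1}(c)$ is two disjoint parallel copies $c_1,c_2$ interchanged by $\tau$. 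This is the usual parity criterion for hyperelliptic covers and I would just cite/recall it. The degenerate sub-case where $c$ bounds exactly two points is where the two lifts are isotopic (they cobound an annulus double-covering the twice-punctured disc bounded by $c$), so one writes $c_1=c_2$; this should be checked by a direct picture.

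Next I would handle the \emph{identification of the lifted Dehn twists}. The key input is the standard fact that if a mapping class $\phi$ of the base preserves the branch locus and lifts through the cover, then the set of lifts is a coset of the deck group $\langle\tau\rangle$; since $|\langle\tau\rangle|=2$, each liftable mapping class has exactly two lifts differing by $\tau$. So it suffices to exhibit \emph{one} lift in each case and read off the other by multiplying by $\tau$. For $c$ bounding an odd number of points, $T_c$ itself need not lift (it permutes the two sheets nontrivially along $c'$), but $T_c^2$ does, and a support-tracking argument — the support of $T_c^2$ is an annulus neighborhood of $c$ whose preimage is an annulus neighborhood of $c'$ on which the cover is trivial of degree two — shows that $T_c^2$ lifts to $T_{c'}$ (a single Dehn twist about the connected preimage), hence the two lifts are $T_{c'}$ and $T_{c'}\tau$. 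For $c$ bounding an even number of points, $T_c$ lifts, and on the preimage annulus (now two disjoint annuli, around $c_1$ and $c_2$) the lift must be $T_{c_1}T_{c_2}$ by the same support argument; the two lifts are then $T_{c_1}T_{c_2}$ and $T_{c_1}T_{c_2}\tau$. Again, when $c$ bounds two points, $c_1=c_2$ and this reads $T_{c_1}^2$.

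The step I expect to require the most care is \emph{pinning down which of the two lifts of $T_c^2$ (resp.\ $T_c$) is the ``plus'' one and which is the ``times $\tau$'' one}, i.e.\ making sure the support/degree argument actually produces $T_{c'}$ on the nose rather than $T_{c'}\tau$ — equivalently, checking that the obvious lift supported in the preimage annulus is genuinely a power of a single Dehn twist and not that twist composed with the global involution. This is a local computation on the annular neighborhood: the preimage of the twist region is a connected annulus mapping to the base annulus by the trivial double cover $z\mapsto z^2$ in suitable coordinates, the twist pulls back to a twist of the same sign, and $\tau$ acts there as the nontrivial deck transformation (a half-turn), so the two candidate lifts are visibly $T_{c'}$ and $T_{c'}\circ(\text{deck})=T_{c'}\tau$. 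Since the paper explicitly flags this as "an easy fact," I would present it compactly: state the parity criterion, invoke the $\mathbb{Z}/2$-coset structure of lifts, and dispatch the two cases by the annular-support picture, deferring the $c_1=c_2$ degeneration to the accompanying figures.
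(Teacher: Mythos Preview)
The paper offers no proof of this Fact; it is simply asserted as ``an easy fact'' about the hyperelliptic cover $\pi_n$. Your proposal---the parity/monodromy criterion for the number of components of $\pi_n^{-1}(c)$, the observation that lifts of a liftable mapping class form a $\langle\tau\rangle$-coset, and the local support computation on an annular neighborhood of $c$---is exactly the standard verification and is correct in outline.

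Two small points worth tightening. First, you write that when $c$ bounds an odd number of points ``$T_c$ itself need not lift''; but the short exact sequence $1\to\mathbb{Z}/2\to\widetilde{PB}_n\xrightarrow{L} PB_n\to 1$ displayed in the paper says $L$ is surjective, so $T_c$ \emph{does} lift. The genuine point is that neither lift of $T_c$ is a power of $T_{c'}$ (in your annulus model the lift is a ``half-twist'' glued to $\tau$ on one side), which is why the Fact is phrased for $T_c^2$ rather than $T_c$. Second, you describe the restricted cover over the annular neighborhood as ``the trivial double cover $z\mapsto z^2$''; this map is the \emph{nontrivial} connected double cover of the annulus. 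This is only a wording slip---the computation you then carry out with it (pulling back a double twist to a single twist about $c'$) is the correct one.
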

From the above fact, we know that if $c$ bounds $2$ points and $c_1=c_2$ are the lifts, we have that $T_{c_1}^2\in \widetilde{PB}_n$. We have the following.
\begin{fact}
If $\alpha$ is a nonseparating simple closed curve that is invariant under $\tau$, then a square of the Dehn twist about $c$ is in $\widetilde{PB}_n$. We call such element an \emph{invariant Dehn twist square}. \end{fact}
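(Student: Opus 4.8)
The plan is to derive this directly from the preceding Fact, once one knows how a $\tau$-invariant simple closed curve sits over its image in $S_{0;n,1}$. So first I would record the relevant piece of the Birman--Hilden picture. Since the fixed-point set of $\tau$ is exactly the $n$ marked points of $S$ and $\alpha$ is disjoint from these, $\tau$ acts on $\alpha$ as a free involution; hence $c:=\pi_n(\alpha)$ is a simple closed curve on $S_{0;n,1}$ and $\pi_n^{-1}(c)=\alpha$ is a connected double cover of $c$. Connectedness of $\pi_n^{-1}(c)$ is equivalent to $\pi_n$ having nontrivial monodromy around $c$, i.e. to $c$ bounding an odd number of marked points, so $\alpha$ is precisely the curve called $c'$ in the preceding Fact. (If one reads ``invariant under $\tau$'' in the weaker sense that only the isotopy class of $\alpha$ is $\tau$-invariant, the one extra possibility is that $\alpha$ is isotopic to a component $c_1$ of the two-curve preimage $c_1\cup c_2$ of a curve $c$ bounding exactly two marked points, in which case $c_1\simeq c_2$ as in the Fact; I would treat that case separately at the end.)

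With $c$ bounding an odd number of marked points, part (1) of the preceding Fact gives $L(T_\alpha)=T_c^{2}$. Since $T_c$ is a Dehn twist about a simple closed curve disjoint from the marked points, it fixes the marked points pointwise, so $T_c\in PB_n$ and a fortiori $T_c^{2}\in PB_n$. Hence $T_\alpha\in L^{-1}(PB_n)=\widetilde{PB}_n$, and in particular $T_\alpha^{2}\in\widetilde{PB}_n$, which is the assertion. In the remaining case ($\alpha\simeq c_1\simeq c_2$, $c$ bounding two marked points) part (2) of the preceding Fact gives $L(T_{c_1}T_{c_2})=T_c$, that is $L(T_\alpha^{2})=T_c\in PB_n$ by the same Dehn-twist observation, so again $T_\alpha^{2}\in\widetilde{PB}_n$.

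The only ingredient here that is not formal once the preceding Fact is available is the statement used in the first paragraph: that a $\tau$-invariant simple closed curve disjoint from the branch locus descends to a simple closed curve enclosing an odd number of marked points (and, in the weaker reading, that the only further $\tau$-symmetric isotopy class is the lift of a two-point curve). This is the classical Birman--Hilden description of symmetric curves for the hyperelliptic involution, and it is the step I would be most careful to state precisely; everything after it is a one-line computation with the homomorphism $L$.
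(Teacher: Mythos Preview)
Your second case is correct and is exactly the paper's one-line derivation: the paper states the Fact immediately after observing, via part~(2) of the preceding Fact, that when $c$ bounds two marked points the two lifts coincide as isotopy classes and $T_{c_1}^{2}\in\widetilde{PB}_n$.

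The organization, however, is backwards. Your primary case --- $\alpha$ setwise $\tau$-invariant and disjoint from the branch locus, hence $c=\pi_n(\alpha)$ bounds an odd number of marked points --- is vacuous under the nonseparating hypothesis. If $c$ bounds $2k+1$ points, the preimage of that disk is a genus-$k$ subsurface of $S$ with single boundary component $\alpha$, and the preimage of the complementary region is likewise connected; hence $\alpha$ separates $S$. So no nonseparating curve arises this way, and your appeal to part~(1) of the preceding Fact establishes nothing for the statement at hand. The actual $\tau$-invariant nonseparating curves are precisely those you treat parenthetically: curves isotopic to a lift $c_1\simeq c_2$ of a two-point curve (equivalently, curves through two Weierstrass points projecting to an arc). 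You should promote that case to the main argument and record that the odd-points branch is empty rather than presenting it as the principal one.
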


Let $b$ be the curve in $D_n^2$ bounding $5$ points $P_1,...,P_5$. The lift of $b$ under the cover $\pi_n$ is a curve $c$ bounding a genus $2$ subsurface as the following figure. 
\begin{figure}[H]
  \includegraphics[width=3in]{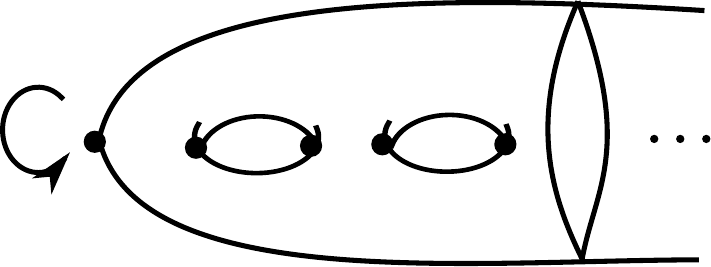}
  \caption{the curve $c$ bounding a genus $2$ surface is the lift of a curve bounding $5$ points}
  \label{p3}
\end{figure}
If a curve $\alpha$ is on the genus $2$ subsurface of $S$ that is cut out by $c$, then we call the invariant Dehn twist square about $\alpha$ a \emph{left invariant Dehn twist square}. We have the following important relations in $\widetilde{PB}_n$. 
\begin{prop}\label{relation}
The element $T_c\in \widetilde{PB}_n$ is a product of left invariant Dehn twist squares in $\widetilde{PB}_n$. 
\end{prop}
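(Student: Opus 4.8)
The plan is to import the classical factorization of the central element of the pure braid group on five strands and lift it through $\pi_n$, pinning down the lift by a support argument. Write $\Sigma=\Sigma_{2;0,1}\subset S$ for the genus‑$2$ subsurface bounded by $c$; by construction $\Sigma$ is the double branched cover $\pi_n^{-1}(D_b)\to D_b$ of the sub‑disk $D_b$ bounded by $b$, branched over $P_1,\dots,P_5$, with $c=\partial\Sigma$.

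\emph{The curves.} For $1\le i<j\le 5$ let $b_{ij}\subset D_b$ be the standard simple closed curve bounding exactly $\{P_i,P_j\}$ used to define the pure braid generator $A_{ij}:=T_{b_{ij}}\in PB_5\subset PB_n$. Since $b_{ij}$ encircles an even number of marked points, the closed disk it bounds has connected preimage under $\pi_n$, an annulus whose boundary is $\pi_n^{-1}(b_{ij})$ (two circles) and whose core $\alpha_{ij}$ is a non-separating, $\tau$-invariant simple closed curve on $\Sigma$ isotopic to each component of $\pi_n^{-1}(b_{ij})$. By the Fact (case 2), $T_{b_{ij}}$ lifts to $T_{\alpha_{ij}}^2$, so each $T_{\alpha_{ij}}^2\in\widetilde{PB}_n$ is a left invariant Dehn twist square.

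\emph{The relation and the lift.} In $PB_5\subset PB_n$ the Dehn twist $T_b$ is the full twist, i.e.\ the generator $z_5$ of the center of $PB_5$, and $z_5$ admits the classical factorization $z_5=\prod_{j=2}^{5}\bigl(A_{1j}A_{2j}\cdots A_{j-1,j}\bigr)$, a positive word of length $10$ in the $A_{ij}$; hence $T_b^2=z_5^2$ is a positive word of length $20$ in the $A_{ij}$. Replacing each letter $A_{ij}$ by its lift $T_{\alpha_{ij}}^2$ gives an element $G\in\widetilde{PB}_n$ which is a product of $20$ left invariant Dehn twist squares, satisfies $L(G)=T_b^2$, and is supported in $\Sigma$ because every $\alpha_{ij}$ lies on $\Sigma$. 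By the Fact (case 1) the two preimages of $T_b^2$ under $L$ are $T_c$ and $T_c\tau$. Now $T_c$, a twist about $\partial\Sigma$, is supported in $\Sigma$, whereas $T_c\tau$ is not: otherwise $\tau=T_c^{-1}(T_c\tau)$ would be supported in $\Sigma$, contradicting that the hyperelliptic involution $\tau$ restricts to a nontrivial (order-two, acting by $-1$ on $H_1$) mapping class of $S\setminus\Sigma$, which has positive genus since $n\ge 9$. As $G\in\{T_c,T_c\tau\}$ is supported in $\Sigma$ we conclude $G=T_c$, which proves the proposition.

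\emph{Where the difficulty lies.} The only delicate point is that a word in $PB_n$ lifts to $\widetilde{PB}_n$ only up to the kernel $\langle\tau\rangle$ of $L$ — the extension $1\to\mathbb{Z}/2\to\widetilde{PB}_n\to PB_n\to 1$ need not split, so there is no coherent choice of lifts — and the support argument above is precisely what removes this ambiguity; it is also the reason for using the positive factorization of $z_5$ into twists about two-point curves rather than writing $T_b^2$ as a power of a half-twist rotation of the disk, whose factors are not even in $PB_n$. As a consistency check, the chain relation $(T_{\alpha_{12}}T_{\alpha_{23}}T_{\alpha_{34}}T_{\alpha_{45}})^{10}=T_c$ in $\Mod(\Sigma_{2;0,1})$ shows independently that $\prod_{i<j}T_{\alpha_{ij}}^2$ and $(T_{\alpha_{12}}T_{\alpha_{23}}T_{\alpha_{34}}T_{\alpha_{45}})^{5}$ coincide, both being the unique lift of $z_5$ supported in $\Sigma$, and that their common square is $T_c$.
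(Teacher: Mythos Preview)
Your argument is correct and follows the same route as the paper: factor $T_b$ (the center of $PB_5$) as a product of the two–point twists $A_{ij}$, lift each $A_{ij}$ to the invariant Dehn twist square $T_{\alpha_{ij}}^2$, and pass to $T_b^2$ to obtain $T_c$. Your support argument pinning down the lift as $T_c$ rather than $T_c\tau$ makes explicit the step the paper leaves implicit in its ``take the square of the equation'' line (where the squaring kills the central $\tau$ only once one knows the chosen lift of $T_b$ is the one supported in $\Sigma$).
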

\begin{proof}
We have the basic fact that $T_b$ is generated by Dehn twists about curves in the interior of $b$ bounding $2$ points; see, e.g., \cite[Chapter 9]{FM}. Take a lift of all of the elements, we obtain that a product of squares of Dehn twists about nonseparating curves that are disjoint from $c$ and on the left of $c$ in $\widetilde{PB}_n$. After taking the square of the equation, we obtain the proposition.
\end{proof}

\subsection{Characteristic annuli}

From now on, we work with the assumption that there exists a realization of the pure braid group 
\[
\mathcal{E}': PB_n\to \Homeo^a_+(D^2_n).
\]
Lifting by the hyperelliptic involution, we obtain a new realization
\[
\mathcal{E}: \widetilde{PB}_n\to \Homeo^a_+(S_g)^\tau
\]
where the image lies in the centralizer of the hyper-elliptic involution $\tau$. We now only work with the new realization $\mathcal{E}$.

For an element $f\in \widetilde{PB}_n$, or a subgroup $F<\widetilde{PB}_n$, we shorten $\mathbf{A}(\mathcal{E}(f))$ as $\mathbf{A}(f)$, and $\mathbf{A}(\mathcal{E}(F))$ as $\mathbf{A}(F)$, to denote the corresponding collections of acyclic components. Denote by $S$ the hyper-elliptic cover we defined in Section 3.2. Recall that $c\subset S$ is a separating curve that is invariant under $\tau$ that divides $S$ into subsurfaces $S_L$ of genus $2$ and $S_R=S-S_L$ (see more about $c$ in the previous section). We know that $T_c\in \widetilde{PB}_n$. We have the following theorem about the minimal decomposition of $\mathcal{E}(T_c)$.

\vskip .3cm

\begin{thm}\label{minimal}
The set $\mathbf{A}(T_c)$ has a component $\mathbf{L}(c)$ which is homotopic to $S_L$ and a component $\mathbf{R}(c)$ homotopic to $S_R$.
\end{thm}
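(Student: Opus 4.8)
\para{Plan of proof}
Set $h=\mathcal E(T_c)\in\Homeo^a_+(S_g)^\tau$; by construction $h$ is freely isotopic to the Dehn twist $T_c$, it is area preserving, and it commutes with $\tau$. The goal is to locate, inside the minimal decomposition $\mathbf S(\langle h\rangle)$, a connected component of $\mathbf A(T_c)$ whose underlying subsurface is homotopic to $S_L$ and one homotopic to $S_R$; the complementary region $A=S-(\mathbf L(c)\cup\mathbf R(c))$ is then the characteristic annulus used in Section~4. The argument runs parallel to the one in \cite{ChenMark}, and I indicate where its technical core enters.

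\emph{Step 1 (a non-acyclic core).} First I would show $\mathbf A(T_c)\neq S$. If every element of $\mathbf S(\langle h\rangle)$ were acyclic, then by Moore's theorem (Theorem~\ref{moore}) the decomposition would be collapsed by a map homotopic to the identity; since $h$ preserves every element of its own decomposition, the induced homeomorphism of $S/\!\sim\,\cong S$ would fix every point, so $h$ would be isotopic to the identity, contradicting $h\simeq T_c$. More precisely, since the canonical reduction system of $T_c$ is exactly $\{c\}$, following the identification of the minimal decomposition with the Nielsen--Thurston structure of $h$ as in \cite{ChenMark}, the union $K$ of the non-acyclic elements of $\mathbf S(\langle h\rangle)$ is an essential annular continuum contained in a regular neighborhood of $c$ --- and it is here that the area-preserving hypothesis is needed, through Theorems~\ref{PB} and~\ref{Matsumoto} applied to essential sub-annuli, to keep $K$ from bubbling out into $S_L$ or $S_R$.

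\emph{Step 2 (both sides survive).} The curve $c$ is the boundary twist of the genus-two piece $S_L$, so $T_c$ lies in two natural subgroups of $\widetilde{PB}_n$: the subgroup $G_L$ of lifts of pure braids on the five points enclosed by $b$ (these mapping classes are supported in $S_L$, and $T_c$ is the lift of their full twist), and a subgroup $G_R$ of lifts of mapping classes supported in $S_R$, containing $T_c$ and rich enough that the minimal decomposition of $\mathcal E(G_R)$ does not coarsen across $c$. Since $\langle T_c\rangle<G_L$ and $\langle T_c\rangle<G_R$, Lemma~\ref{finer} gives $\mathbf A(\mathcal E(G_L))\subseteq\mathbf A(T_c)$ and $\mathbf A(\mathcal E(G_R))\subseteq\mathbf A(T_c)$. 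I would then argue, as in \cite{ChenMark}, that $\mathbf A(\mathcal E(G_R))$ contains a subsurface homotopic to $S_L$: $\mathcal E(G_R)$ is trivial up to isotopy on $S_L$ while its restriction to $S_R$ contains a pseudo-Anosov on an essential subsurface, so its minimal decomposition cannot coarsen across $c$, and area preservation forbids a non-acyclic invariant continuum inside the $S_L$-side; thus $S_L$ persists as an acyclic region. Symmetrically $\mathbf A(\mathcal E(G_L))$ contains a subsurface homotopic to $S_R$. Hence $\mathbf A(T_c)$ contains a subsurface homotopic to $S_L$ and one homotopic to $S_R$, a priori possibly inside a single component.

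\emph{Step 3 (conclusion and main obstacle).} The subsurface homotopic to $S_L$ from Step~2 is disjoint from the continuum $K$ of Step~1, hence contained in the component of $S-K$ that is itself homotopic to $S_L$; the component of $\mathbf A(T_c)$ that contains it is then squeezed between two subsurfaces homotopic to $S_L$, so it is itself homotopic to $S_L$ --- this is $\mathbf L(c)$. Symmetrically one obtains $\mathbf R(c)\simeq S_R$, and $\mathbf L(c)\neq\mathbf R(c)$ since they lie on opposite sides of $K$. The genuine difficulty --- where the proof has real content --- is the confinement used in Steps~1 and~2: a homeomorphism merely isotopic to $T_c$, or an arbitrary realization of $G_L$, may have a wild minimal decomposition on either side, and ruling this out requires both the area-preserving hypothesis (through the recurrence and prime-end rotation-number machinery of Section~2) and the abundance of elements in the centralizer of $T_c$ in $\widetilde{PB}_n$. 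This technical core is inherited from \cite{ChenMark}.
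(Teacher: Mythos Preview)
Your plan follows the same architecture as the paper, which likewise defers the technical work to \cite[Theorem~4.1]{ChenMark}; both arguments hinge on producing pseudo-Anosov elements in $\widetilde{PB}_n$ on each side of $c$. Two points deserve correction, however.

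\textbf{Step 2 misplaces the pseudo-Anosov.} You try to show that $S_L$ lies in $\mathbf A(T_c)$ by passing through the larger group $G_R$ (supported on $S_R$) via Lemma~\ref{finer}, and then arguing that $\mathbf A(\mathcal E(G_R))$ contains a copy of $S_L$ because $\mathcal E(G_R)$ is ``isotopically trivial on $S_L$'' and ``area preservation forbids a non-acyclic invariant continuum inside the $S_L$-side''. Neither clause does the job: the homeomorphisms $\mathcal E(g)$ for $g\in G_R$ need not preserve $S_L$ at all, so one cannot speak of their restriction to $S_L$, and there is no general principle that an area-preserving homeomorphism isotopic to the identity has an entirely acyclic minimal decomposition. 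The paper (following \cite{ChenMark}) instead works directly with $\mathbf A(T_c)$ and uses a pseudo-Anosov $\phi_L$ supported on $S_L$ \emph{itself}: since $\phi_L$ commutes with $T_c$, the realization $\mathcal E(\phi_L)$ acts on the components of $\mathbf A(T_c)$, and the induced map on the Moore quotient must be pseudo-Anosov on the $S_L$-side, which pins that component down up to homotopy. The symmetric argument with a pseudo-Anosov $\phi_R$ on $S_R$ handles the other side. Your detour through $G_L,G_R$ is not needed and, as written, does not close.

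\textbf{The bound $n\ge 9$.} The paper explicitly records that this is the step where $n\ge 9$ is used: it guarantees that the complement $S_R$ has enough genus for $\widetilde{PB}_n$ to contain a pseudo-Anosov supported there. Your sketch should flag this, since otherwise the existence of $\phi_R$ is unexplained.
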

\begin{proof}[Proof sketch]
The proof is the same as the proof of \cite[Theorem 4.1]{ChenMark}. We use the fact that there are pseudo-Anosov elements on the left and on the right of $c$ in $\widetilde{PB}_n$. In this theorem, we need $n\ge 9$.
\end{proof}

For the rest of paper, denote by 
\[
\mathbf{B}:=S-\mathbf{L}(c)-\mathbf{R}(c).
\]
Let $p_L: \mathbf{L}(c)\to \mathbf{L}(c)/\sim$ and $p_R: \mathbf{R}(c)\to \mathbf{R}(c)/\sim$ be the Moore maps of $\mathbf{L}(c)$ and $\mathbf{R}(c)$ corresponding to the decomposition $\mathbf{S}(c)$. Let $\mathbf{L}\subset \mathbf{L}(c)/\sim$ be an open annulus bounded by the end of $\mathbf{L}(c)'$ on one side, and by a simple closed curve on the other. The open annulus $\mathbf{R}\subset \mathbf{R}(c)/\sim$ is defined similarly. We have the following definition (see \cite[Chapter 5]{Mar}). 

\begin{defn} An annulus of the form $A=p_L^{-1}(\mathbf{L})\cup \mathbf{B} \cup p_R^{-1}(\mathbf{R})$ is called  a \emph{characteristic annulus}.
\end{defn}

\vskip .3cm

Denote by $f=\mathcal{E}(T_c)$. Every characteristic annulus is invariant under $f$. We observe that $\mathbf{B}$ is a separator in $A$, that is, $\mathbf{B}$ is an essential, compact, and connected subset of $A$.  Note that a characteristic annulus $A$ is invariant under $f$, but it may not be invariant under homeomorphisms which are lifts 
(with respect to $\mathcal{E}$) of other elements from $\widetilde{PB}_n$.  However,  $\mathbf{B}$ is invariant under these lifts  of elements from  the image under $\mathcal{E}$ of the centralizer of $T_c$ in $\widetilde{PB}_n$. As we see from the next lemma, the dynamical information about $f$ is contained in $\mathbf{B}$.

\vskip .3cm

\begin{lem}\label{gap} Fix a characteristic annulus $A$. Then
\begin{enumerate}
\item  every number $0<r<1$ appears as the rotation number $\rho(f,x,A)$, for some $x\in A$,
\item if  $0<\rho(f,x,A)<1$, then $x\in B$.
\end{enumerate}
\end{lem}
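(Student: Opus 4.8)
The plan is to analyze the dynamics of $f = \mathcal{E}(T_c)$ on a characteristic annulus $A$ by relating the rotation numbers on $A$ to the structure coming from the minimal decomposition. First I would recall that $\mathbf{B}$ is a separator in $A$, so by Lemma \ref{separator} (applied to $\mathbf{B}$, or rather its annular completion $K=K(\mathbf{B})$) the complement $A - \mathbf{B}$ has two distinguished annular components, one containing each end of $A$. By construction these two ends of $A$ come from the ends of $\mathbf{L}(c)$ and $\mathbf{R}(c)$ in the respective Moore quotients, and the pieces $p_L^{-1}(\mathbf{L})$ and $p_R^{-1}(\mathbf{R})$ glued in to form $A$ are precisely (pushed-forward images of) collar neighborhoods of these ends. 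The key input is that on $\mathbf{L}(c)$ and $\mathbf{R}(c)$ the Moore map collapses acyclic sets, and the induced homeomorphism of $T_c$ on each quotient annulus is, away from $\mathbf{B}$, conjugate to the standard model of a Dehn twist — so on the ``collar'' annuli $p_L^{-1}(\mathbf{L})$ and $p_R^{-1}(\mathbf{R})$ the map $f$ has rotation number $0$ near the outer end and the full twisting is concentrated in $\mathbf{B}$.

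For part (2): suppose $0 < \rho(f,x,A) < 1$. I would argue that $x$ cannot lie in $p_L^{-1}(\mathbf{L})$ or $p_R^{-1}(\mathbf{R})$. Indeed these two sets are the components of $A - \mathbf{B}$ adjacent to the two ends of $A$; on the outer (boundary) portion of each, $f$ acts (through the Moore map identification with the standard Dehn-twist model) as a map whose lift has zero translation on that sub-annulus, so every point there has rotation number $0 \bmod 1$. More carefully, the quotient $\mathbf{L}(c)/\!\!\sim$ with its $T_c$-action is identified with an annulus on which the Dehn twist $T_c$ is supported away from the collar $\mathbf{L}$; hence for $x \in p_L^{-1}(\mathbf{L})$ the orbit stays in $p_L^{-1}(\mathbf{L})$ and projects to a point with rotation number $0$, and rotation number is preserved under the Moore projection and the conformal identification used to define $\rho(f,x,A)$. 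The same holds on the right. Therefore any $x$ with nonzero (mod-$1$) rotation number must lie in $\mathbf{B}$. For part (1): $T_c$ is a Dehn twist about the curve separating $\mathbf{L}(c)$ from $\mathbf{R}(c)$; realized on a characteristic annulus, a model computation (exactly as in the standard picture of a Dehn twist on an annulus, transported through the Moore maps) shows that the translation set $R(\widetilde f)$ on $A$ contains $[0,1]$ — the rotation number interpolates continuously from $0$ at one end of $A$ through $\mathbf{B}$ to $1$ at the other end, because a single Dehn twist contributes exactly one full turn. By Theorem \ref{rotationproperty} (Handel: the translation set is a closed interval) combined with this interpolation, every $0 < r < 1$ is attained.

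The main obstacle I anticipate is making the identification in part (2) rigorous: one must know that, through the Moore map $p_L$ (and the collapsing of acyclic pieces), the homeomorphism $f$ genuinely acts on $p_L^{-1}(\mathbf{L})$ with trivial rotation, rather than just that $\mathbf{L}$ is an annulus in the quotient. This requires knowing that the action of $T_c$ on the quotient $\mathbf{L}(c)/\!\!\sim$ is, near its inner end, the identity up to isotopy supported in $\mathbf{B}$ — i.e. that all the twisting of $T_c$ is ``absorbed'' into the separator $\mathbf{B}$ and none leaks into the collars. This is where I would lean on the construction of characteristic annuli in \cite[Chapter 5]{Mar} and on Theorem \ref{minimal}, which pins down $\mathbf{L}(c)$ and $\mathbf{R}(c)$ as the acyclic components homotopic to $S_L$ and $S_R$; the complement $\mathbf{B}$ then necessarily carries the Dehn-twist monodromy. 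A secondary technical point is checking that rotation numbers are unchanged under the conformal uniformization in the definition $\rho(f,x,A) = \rho(g, u_A(x), N)$ and under the Moore collapsing, which follows from semiconjugacy and the homotopy-invariance built into the definition via lifts to $\widetilde A$.
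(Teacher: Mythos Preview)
Your overall strategy matches the paper's one-line justification (the paper defers to \cite[Lemma~4.5]{ChenMark}, noting only that $f$ is isotopic to a Dehn twist and area-preserving). But your route to part~(2) through the Moore quotient and a ``standard Dehn-twist model'' on $\mathbf{L}(c)/\!\sim$ has exactly the gap you flag, and the fix is not the one you suggest. The induced map $\bar f$ on $\mathbf{L}(c)/\!\sim$ is isotopic to the identity, but nothing in Theorem~\ref{minimal} or in the construction of characteristic annuli forces it, as an actual homeomorphism, to be supported away from the collar $\mathbf{L}$; an identity-isotopic homeomorphism of a surface can perfectly well rotate a boundary collar. The correct argument bypasses the quotient: any $x\in A\setminus\mathbf{B}$ lies in $\mathbf{L}(c)\cup\mathbf{R}(c)\subset\mathbf{A}(T_c)$, hence in some acyclic element $S\in\mathbf{S}(T_c)$. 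This $S$ is $f$-invariant, compact, and (since $\pi_1(A)\hookrightarrow\pi_1(S_g)$) lifts to a compact set $\widetilde S\subset\widetilde A$; then $\widetilde f(\widetilde S)=T^k\widetilde S$ for some integer $k$, so the translation number at $\widetilde x$ is $k$ and $\rho(f,x,A)=0\bmod 1$. Acyclicity of the decomposition pieces is the mechanism, not a model identification on the quotient.

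For part~(1) you have the right ingredients but in slightly scrambled order. One does not first exhibit $[0,1]$ by a model computation and then invoke Handel; rather, from part~(2) one knows the translation numbers of a fixed lift $\widetilde f$ on the two sides of $\mathbf{B}$ are integers, and they differ by exactly $1$ because $f$ is isotopic to a single Dehn twist on $A$. Handel's theorem (which is where the area-preserving hypothesis enters, as the paper emphasizes) then says the translation set is a closed interval, hence contains $[0,1]$. Note also that Theorem~\ref{rotationproperty} is stated for closed annuli, so one applies it on a suitable compact $f$-invariant piece such as $K(\mathbf{B})$ rather than on the open annulus $A$ directly.
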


The proof of the above lemma can be seen in \cite[Lemma 4.5]{ChenMark}. The reason is that $f$ is homotopic to a Dehn twist and that the realization is area-preserving.

\section{The proof of Theorem \ref{main}}
In this section, we discuss the proof of Theorem \ref{main}. We now discuss the main strategy.

\subsection{Outline of the proof}
Recall that $c$ is a separating simple closed curve that divides the surface $S$ (the hyper-elliptic cover of $S_{0;1,n}$) into a genus $2$ subsurface and the rest.  Fix a characteristic annulus $A$. Let $E_r$ be the set of points in $A$ that have rotation numbers equal to $r$ under $\mathcal{E}(T_c)$. Lemma \ref{gap} states that the set $E_r$ is not empty when $0<r<1$.

The key observation of the proof lies in the analysis of connected components of $E_r$. Let $E$ be a component of $E_r$. We show the following:

\begin{enumerate}

\item $E$ is $\mathcal{E}(h)$-invariant for $h$ a left invariant Dehn twist square,
\vskip .3cm

\item $\overline{E}$ is a separator in $A$,

\vskip .3cm

\item if $E$ contains a periodic orbit, then $E$ contains  a separator. 

\end{enumerate}

Denote by $K(\overline{E})$  the annular completion of $\overline{E}$, and  let $\rho(\mathcal{E}(T_c), K(\overline{E}))$ be the rotation interval of $K(\overline{E})$. We claim that $\rho(\mathcal{E}(T_c), K(\overline{E}))=\{r\}$. First of all, we know that  $r\in \rho(\mathcal{E}(T_c), K(\overline{E}))$. If $\rho(\mathcal{E}(T_c), K(\overline{E}))\neq \{r\}$, then $\rho(\mathcal{E}(T_c), K(\overline{E}))$ contains infinitely many rational numbers. By Theorem \ref{PB}, there exist three periodic points $x_1,x_2,x_3\in K(\overline{E})$ with different rational rotation numbers $r_1,r_2,r_3$. Let $F_i$ denote the connected component of $E_{r_{i}}$ containing $r_i$, and let $M_i\subset F_i$ be a separator. 

By Proposition \ref{ordering}, there is an ordering on disjoint separators. Without loss of generality, we assume that $M_1<M_2<M_3$. Based on a discussion about the position $E$ with respect to $M_i$'s, we obtain a contradiction.
Thus,  $\rho(\mathcal{E}(T_c), K(E))$ is the singleton $\{r\}$. 

We know from Theorem \ref{Matsumoto} that the left and right prime ends rotation numbers of $K(\overline{E})$ are both $r$. But in the group of circle homeomorphisms, the centralizer of an irrational rotation is essentially $SO(2)$. 

We then show a new ingredient of the proof: the rotation numbers of the realization of a left invariant Dehn twist square on the set of prime ends of $K(\overline{E})$ are all $0$. This contradicts the fact that  $T_c$ is a product left invariant Dehn twist squares
 as in Proposition \ref{relation}.

\subsection{The set $E_r$} Once again we use abbreviation $f=\mathcal{E}(T_c)$. For a characteristic annulus $A$, we let 
$$
E_r=\{x\in A: \rho\big(f,x,A\big)=r\}. 
$$
By Lemma \ref{gap}, if $0<r<1$, we know that $E_r$ is nonempty and $E_r\subset \mathbf{B}$.

\vskip .3cm

Next, we have the following key lemmas which corresponds to \cite[Lemma 5.1, 5.3, 5.4]{ChenMark}.

\begin{lem}\label{invariant}
Fix $0<r<1$, and let $E$ denote a connected component  of $E_r$. Fix a left invariant Dehn twist square $h$ in $\widetilde{PB}_n$. For $x\in E$, let $C(x) \in \mathbf{A}(h)$ be the corresponding acyclic set. Then $C(x)\subset E$.  In particular,  $E$  is $\mathcal{E}(C(T_c))$-invariant.
\end{lem}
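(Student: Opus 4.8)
The goal is to show that each connected component $E$ of $E_r$ is a union of acyclic sets from $\mathbf{A}(h)$, where $h$ is a left invariant Dehn twist square. The key conceptual point is that the rotation number $\rho(f,\cdot,A)$ of the fixed homeomorphism $f=\mathcal{E}(T_c)$ is constant on each acyclic set $C \in \mathbf{A}(h)$. Granting this, if $x\in E$ has $\rho(f,x,A)=r$, then every point of $C(x)$ also has rotation number $r$, so $C(x)\subset E_r$; since $C(x)$ is connected and meets $E$, we get $C(x)\subset E$, which is exactly the claim. The final sentence then follows because the centralizer $C(T_c)$ in $\widetilde{PB}_n$ contains all left invariant Dehn twist squares (they are supported on $S_L$, disjoint from $c$), and $E$ is invariant under $\mathcal{E}(h)$ for each such $h$ by the same argument — actually invariance of $E$ under $\mathcal{E}(h)$ is automatic once we know $E$ is a union of elements of $\mathbf{A}(h)$, since each $\mathcal{E}(h')$ for $h'$ in the group generated permutes acyclic sets and fixes rotation numbers.

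\textbf{Key steps.} First I would invoke Lemma~\ref{finer}: since $\langle h\rangle < \widetilde{PB}_n$ under $\mathcal{E}$, we have $\mathbf{A}(\widetilde{PB}_n)\subset\mathbf{A}(\langle h\rangle)=\mathbf{A}(h)$; in particular $\mathbf{A}(T_c)\subset\mathbf{A}(h)$, so the characteristic annulus structure (built from $\mathbf{A}(T_c)$) is compatible with the finer decomposition $\mathbf{A}(h)$. Second, I would establish that rotation numbers are constant on acyclic sets: if $C\in\mathbf{A}(h)$ is acyclic, it lifts to a compact set $\widetilde{C}$ in the universal cover, and since $\mathcal{E}(h)$ preserves $C$ setwise, a lift $\widetilde{g}$ of $f=\mathcal{E}(T_c)$... — wait, here one must be careful: $f$ need not preserve $C$. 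The correct route, following \cite[Lemma 5.1]{ChenMark}, is to use the Moore map $\phi$ collapsing the decomposition $\mathbf{A}(h)$ (more precisely the minimal decomposition $\mathbf{S}(h)$, whose acyclic part is $\mathbf{A}(h)$): $\phi$ is homotopic to the identity, so it does not change the lift used to define translation numbers, and $\rho(f,x,A)$ depends only on $\phi(x)$ because any two points in the same acyclic fiber have lifts at bounded distance in $\widetilde{A}$ (the fiber is acyclic, hence has compact lift), forcing equal limits in \eqref{rot-0}. Third, with constancy in hand, the set-theoretic argument above closes the proof, and I would note the final ``in particular'' by observing that for any $h'\in C(T_c)$ of the relevant form, $\mathcal{E}(h')$ permutes the sets in $\mathbf{A}(h')\supset\{$fibers$\}$ while commuting with $f$ up to the relevant lift, hence preserves each $E_r$ and thus permutes the components of $E_r$; but $E$ being a union of acyclic fibers that each stay in $E_r$ pins it down.

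\textbf{Main obstacle.} The delicate step is the second one: making precise that the rotation number $\rho(f,x,A)$ is well-defined on the quotient by the decomposition associated to $h$, even though $f$ itself does not preserve individual acyclic sets of $\mathbf{A}(h)$. What saves us is that $\mathbf{A}(h)\subset \mathbf{A}(T_c)$... no — it is the reverse inclusion $\mathbf{A}(T_c)\subset\mathbf{A}(h)$, so $\mathbf{A}(h)$ is the \emph{finer} decomposition, and the point is rather that each acyclic fiber of $\mathbf{A}(h)$ has a compact lift to $\widetilde{A}$, so the displacement $p_1\circ\widetilde f^n - p_1$ cannot separate two points of a common fiber by more than a bounded amount, giving a common limit. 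I expect this uniform-boundedness-of-lifts argument, together with the bookkeeping that $A$ is built from the coarser decomposition so $\mathbf{B}$ and the gluing behave well under both $\mathbf{A}(T_c)$ and $\mathbf{A}(h)$, to be where all the real work lies; the rest is formal. Since this is carried out in detail in \cite[Lemma 5.1]{ChenMark}, I would present it in condensed form, emphasizing only the two inputs: Lemma~\ref{finer} for the inclusion of decompositions, and acyclicity (equivalently compact lifts) for constancy of rotation numbers along fibers.
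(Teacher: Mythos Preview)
Your overall strategy --- show that the rotation number $\rho(f,\cdot,A)$ is constant on each acyclic set $C(x)\in\mathbf{A}(h)$, then conclude $C(x)\subset E_r$ by connectedness --- is exactly the route the paper takes (by deferring to \cite[Lemma~5.1]{ChenMark}). So in spirit your proposal matches the paper.

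However, your invocation of Lemma~\ref{finer} is incorrect. From $\langle h\rangle<\widetilde{PB}_n$ you correctly get $\mathbf{A}(\widetilde{PB}_n)\subset\mathbf{A}(h)$, but you then assert ``in particular $\mathbf{A}(T_c)\subset\mathbf{A}(h)$''. This does not follow: Lemma~\ref{finer} applied to $\langle T_c\rangle<\widetilde{PB}_n$ gives the \emph{opposite} inclusion $\mathbf{A}(\widetilde{PB}_n)\subset\mathbf{A}(T_c)$, so the two applications only yield a common refinement, not comparability of $\mathbf{A}(T_c)$ and $\mathbf{A}(h)$. (You seem to sense this later when you hesitate over the direction of inclusion.) In fact no inclusion between $\mathbf{A}(T_c)$ and $\mathbf{A}(h)$ is needed or available.

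The correct structural input is commutation, not inclusion of decompositions. Since $h$ is supported in $S_L$ (disjoint from $c$), $T_c$ and $h$ commute in $\widetilde{PB}_n$, hence $f=\mathcal{E}(T_c)$ commutes with $\mathcal{E}(h)$. By uniqueness of the minimal decomposition, $f$ then permutes the elements of $\mathbf{S}(h)$, so each iterate $f^n(C(x))$ is again an acyclic element of $\mathbf{A}(h)$. This is precisely what makes your ``bounded displacement'' argument work uniformly in $n$: for each $n$, the set $\widetilde{f}^n(\widetilde{C(x)})$ is a compact lift of an acyclic set, so any two points of $C(x)$ remain in a common compact lift after iteration, forcing equal rotation numbers. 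Your sketch as written only bounds the lift of $C(x)$ itself, which is not enough --- without $f$-invariance of the decomposition $\mathbf{S}(h)$, $f^n$ could separate points of $C(x)$ unboundedly in $\widetilde{A}$. The same commutation is also what gives the ``in particular'' clause directly: each $\mathcal{E}(h')$ for $h'\in C(T_c)$ commutes with $f$, hence preserves $E_r$, and being a homeomorphism it permutes components of $E_r$.
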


\begin{lem} \label{separatingL0}
The closed set $\overline{E}$ is a separator (as defined in Section 2). 
\end{lem}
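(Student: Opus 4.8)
The plan is to show that the closure of a connected component $E$ of $E_r$ (for $0<r<1$) separates the two ends of the characteristic annulus $A$. Recall from Lemma \ref{gap} that $E_r \subset \mathbf{B}$, so $E$ lies in the separator $\mathbf{B}$ and in particular $\overline{E}$ is compact. Connectedness of $\overline{E}$ is immediate from connectedness of $E$. So the real content is the separating (essentiality) property: every arc $\gamma \subset A$ joining the left end to the right end must meet $\overline{E}$.

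\medskip\noindent\textbf{Approach.} The natural strategy is to argue by contradiction: suppose there is an arc $\gamma$ from the left end to the right end of $A$ avoiding $\overline{E}$. The idea is to use the dynamics of $f = \mathcal{E}(T_c)$ together with the intermediate value behaviour of the rotation number. First I would recall that, by part (1) of Lemma \ref{gap}, $A_L(\overline{E}) := $ the component of $A - \overline{E}$ containing the left end is an open annulus whose points near the left end have rotation number $0$ (they lie outside $\mathbf{B}$), and similarly for $A_R(\overline{E})$ near the right end. Since $E$ is a \emph{connected component} of $E_r$, the set $\overline{E}$ is a maximal connected piece carrying rotation number $r$; any point just outside $\overline{E}$ either has rotation number $\ne r$ or escapes $E_r$. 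The key point is to combine this with the fact that on a characteristic annulus the rotation number, while not continuous, cannot "jump over" $\mathbf{B}$: $0$ is attained near both ends and $r \in (0,1)$ is attained in the interior, and one wants to conclude that the level set for $r$ must block all end-to-end arcs.

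\medskip\noindent\textbf{Key steps.} (i) Reduce to the annular completion: replace $\overline{E}$ by $K(\overline{E})$, which by the Proposition on annular completions is an annular continuum provided $\overline{E}$ is a separator — but since that is what we are trying to prove, instead I would work directly with the components of $A - \overline{E}$. (ii) Show $A - \overline{E}$ has a component $U$ containing the left end and a (possibly equal) component $V$ containing the right end, and suppose for contradiction $U = V$, i.e. $\overline{E}$ does not separate. (iii) Then there is an essential simple closed curve $\delta$ in $U = V$ (an arc from end to end can be completed, through the ends, to an essential loop in the compactified sphere minus $\overline{E}$), and $\delta$ is disjoint from $\overline{E}$. (iv) Now use that $f$ is isotopic to the Dehn twist $T_c$ and is area-preserving: by Lemma \ref{gap}(1) every value in $(0,1)$ is realised, and I would invoke the fact (as in \cite[Lemma 4.5]{ChenMark}, whose proof idea I would mirror) that the rotation number is monotone along any essential arc transverse to the twisting region, so the set of points with rotation number $r$ separates the region where rotation number $<r$ from where it is $>r$; hence $E_r$ already separates, and its component adjacent to both these regions — which must be $E$ by connectedness and the structure of $\mathbf{B}$ — separates. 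This contradicts $\delta \cap \overline{E} = \emptyset$.

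\medskip\noindent\textbf{Main obstacle.} The delicate part is step (iv): rotation number is \emph{not} a continuous function on $A$, so "monotonicity along an arc" and "the $r$-level set separates" require care. The clean way is to pass to the prime-end / annular-continuum framework already set up: use that $f$ area-preserving plus isotopic to $T_c$ forces, via Theorem \ref{PB} and the Poincaré–Birkhoff part of Theorem \ref{rotationproperty}, that the rotation set of any essential sub-annulus between a "rotation $<r$" region and a "rotation $>r$" region contains $r$, and then that the $r$-valued points cannot all be confined to a disk — they form a separator. I expect this is exactly where the paper refers back to \cite[Lemma 5.3]{ChenMark}; I would either cite that argument or reproduce it: given a hypothetical end-to-end arc disjoint from $\overline{E}$, a small essential annular neighbourhood of that arc would have rotation set disjoint from $r$, yet it connects the two ends, forcing (by the same intermediate-value mechanism used to prove Lemma \ref{gap}) that it contains points of every rotation number in $(0,1)$ including $r$ — the contradiction. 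Making the intermediate-value step rigorous without continuity — typically via lifts to $\widetilde{A}$ and comparing translation numbers of boundary-parallel curves — is the technical heart of the proof.
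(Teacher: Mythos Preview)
The paper does not prove this lemma in-line; it records that Lemmas~\ref{invariant}, \ref{separatingL0}, \ref{rational} correspond to \cite[Lemmas 5.1, 5.3, 5.4]{ChenMark} and imports the arguments wholesale. So there is no in-paper proof to compare against directly, but one can still say where your sketch falls short of what that argument must contain.

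There is a genuine gap in step~(iv). First, a small point: an embedded arc in $A$ is contractible, so it has no ``essential annular neighbourhood''; the contradiction in your final paragraph never gets started. More importantly, even if one grants (by some Poincar\'e--Birkhoff style reasoning) that the \emph{full} level set $E_r$ separates the ends of $A$, this says nothing about a single connected component $E\subset E_r$. Your line ``its component adjacent to both these regions --- which must be $E$ by connectedness and the structure of $\mathbf{B}$'' is precisely the assertion to be proved, and nothing in your outline singles out $E$ among the possibly many components of $E_r$. An end-to-end arc disjoint from $\overline{E}$ can meet $E_r$ in other components, so no intermediate-value contradiction arises.

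What your outline never invokes, and what the cited proof does rely on, is Lemma~\ref{invariant}: the specific component $E$ is invariant under $\mathcal{E}(C(T_c))$, and in particular under realizations of elements that are pseudo-Anosov on the pieces adjacent to $c$ (such elements exist in $\widetilde{PB}_n$; cf.\ the proof sketch of Theorem~\ref{minimal}). It is this dynamical invariance under a rich subgroup, not an intermediate-value behaviour of the rotation number, that rules out $\overline{E}$ sitting inside an inessential disk in $A$. Without Lemma~\ref{invariant} there is simply no mechanism to distinguish $E$ from an arbitrary component of $E_r$, and the approach you describe cannot be completed.
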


\begin{lem}\label{rational}
Let $x$ be a periodic orbit of $f$ such that $\rho(f,x,A)=p/q$ and $0<p/q<1$. Then, the connected component $E$ of $E_{p/q}$ which contains $x$, also contains a separator (as a subset).
\end{lem}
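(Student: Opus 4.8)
The plan is to build the required separator inside $E$ by taking a suitable "annular hull'' of the periodic orbit together with the acyclic pieces forced to lie in $E$ by Lemma~\ref{invariant}. Let $x$ be the given periodic orbit with $\rho(f,x,A)=p/q$, $0<p/q<1$, so by Lemma~\ref{gap} the orbit lies in $\mathbf{B}\subset A$, and $x\subset E_{p/q}$; let $E$ be the component of $E_{p/q}$ containing $x$. Since $\mathbf{B}$ is compact and connected and is invariant under the lifts of everything in the centralizer of $T_c$ — in particular under $\mathcal{E}(h)$ for every left invariant Dehn twist square $h$ — the first step is to observe that $E$ is relatively closed in $\mathbf{B}$ (as a level set of the rotation-number function restricted to $\mathbf{B}$, which is closed where defined, cf.\ the Franks--Le Calvez uniform-convergence statement), hence $\overline{E}$ is compact by Lemma~\ref{separatingL0}, which already tells us $\overline{E}$ is a separator. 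The content to be added is that a separator can be found \emph{inside} $E$, not merely in its closure.

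The key step is to promote the single periodic orbit to an essential set. First I would pass to $g=f^q$, for which every point of the orbit $x$ is a fixed point with zero translation number on the appropriate lift; note $g=\mathcal{E}(T_c^q)$ still lies in the image of the centralizer of $T_c$, so $\mathbf{B}$ is $g$-invariant. Now apply Lemma~\ref{invariant}: choosing $h$ a left invariant Dehn twist square, each point $y\in E$ has its acyclic set $C(y)\in\mathbf{A}(h)$ contained in $E$, and these acyclic sets are compact and connected. The union $\Sigma=\bigcup_{y\in x}C(y)$ is then a finite union of compact connected acyclic sets inside $E$. If $\Sigma$ is already essential in $A$ we are done after taking a connected compact essential subset of it; if not, I would argue that $E$ must still contain an essential loop by a connectedness-plus-area argument: were $\overline{E}$ essential but $E$ inessential, $\overline{E}\setminus E$ would contain an essential set, but the points of $\overline{E}\setminus E$ are limits of points with rotation number $\neq p/q$ from both sides (else they would lie in $E$), and such a configuration forces, via Matsumoto's theorem (Theorem~\ref{Matsumoto}) applied to the annular completion $K(\overline{E})$, that $p/q$ is an interior point of $\sigma(f,K(\overline{E}))$; then Theorem~\ref{PB} produces periodic points of nearby rational rotation number in $K(\overline{E})$, i.e.\ arbitrarily close to $x$ — and one runs the ordering argument of Proposition~\ref{ordering} exactly as in the outline to get a contradiction with $E$ being a single component. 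This is the same mechanism as in \cite[Lemma~5.4]{ChenMark}, and I would import that argument essentially verbatim.

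Concretely, then, the steps in order are: (i) reduce to $g=f^q$ and record that $\mathbf{B}$ is $g$-invariant; (ii) invoke Lemma~\ref{invariant} to fill $E$ with acyclic sets $C(y)$, $y\in x$; (iii) form $\Sigma=\bigcup_{y\in x}C(y)\subset E$, a compact set, and let $M$ be the annular completion (in the sense of Section~2) of a connected essential subset of $\Sigma$, after first establishing that $\Sigma$ — or $E$ itself — carries an essential loop; (iv) establish essentiality of $E$ by the Matsumoto/Franks--Le Calvez dichotomy together with the ordering of separators, exactly as in the outline, so that any single-rotation-number component which contains a periodic orbit is forced to be essential. Then $M\subset E$ is the desired separator. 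The main obstacle is step (iv): one genuinely needs that a component $E$ of a rational level set containing a periodic point cannot be "inessential with essential closure,'' and the cleanest route is the prime-end/ordering contradiction, so the real work is checking that the configuration of $M_1<M_2<M_3$ produced from three nearby rational rotation numbers contradicts the hypothesis that $E$ is a \emph{single} component of $E_{p/q}$ — which is precisely the argument the paper defers to \cite{ChenMark}, and which I would reproduce here with the obvious modifications.
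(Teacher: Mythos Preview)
Your proposal has a genuine circularity. In step~(iv) you propose to ``run the ordering argument of Proposition~\ref{ordering} exactly as in the outline'' to force $E$ to be essential. But look again at the outline in Section~4.1: the ordering argument there starts from three periodic points $x_1,x_2,x_3$ with distinct rational rotation numbers, passes to their components $F_i$ of $E_{r_i}$, and then takes separators $M_i\subset F_i$. The existence of those $M_i$ is precisely item~(3) of the outline, which \emph{is} Lemma~\ref{rational}. So the outline's ordering argument consumes Lemma~\ref{rational} as input; you cannot use it to establish Lemma~\ref{rational}. Without the lemma already in hand, the only separators you have near the $F_i$ are the closures $\overline{F_i}$ from Lemma~\ref{separatingL0}, and these need not be disjoint from $\overline{E}$ (level sets are disjoint, their closures need not be), so Proposition~\ref{ordering} does not apply.

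Steps~(ii)--(iii) are also off track. The sets $C(y)\in\mathbf{A}(h)$ are acyclic by definition, hence non-separating; a finite union $\Sigma=\bigcup_{y\in x}C(y)$ of acyclic sets has no reason whatsoever to be essential in $A$, and ``taking a connected essential subset of $\Sigma$'' presupposes exactly what has to be shown. Lemma~\ref{invariant} is useful for proving invariance of $E$ under $\mathcal{E}(h)$, but it does not manufacture essentiality.

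The argument the paper defers to (\cite[Lemma~5.4]{ChenMark}) is not the ordering argument; it is a direct dynamical construction on the universal cover. One passes to the lift $\widetilde{g}=\widetilde{f}^{\,q}\circ T^{-p}$, for which the lifts of the orbit $x$ are fixed, while points over $p_L^{-1}(\mathbf{L})$ are displaced negatively (translation number $-p$) and points over $p_R^{-1}(\mathbf{R})$ positively (translation number $q-p$). A Brouwer-type argument then shows that the connected component of a suitable $\widetilde{g}$-invariant set through a lift of $x$ must separate the strip; its projection is the desired separator in $E$. This is what you should reconstruct, rather than the downstream ordering argument.
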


Fix an irrational number $r\in (0,1)$. By Lemma \ref{gap}, we know that $E_r$ is not empty. Let $E$ be a connected component of $E_r$. 
By Lemma \ref{invariant}, we know that $E$ is invariant under $\mathcal{E}(C(T_c))$. By Lemma \ref{separatingL0}, we know that $\overline{E}$ is a separator. The annular completions $K(\overline{E})$ of $\overline{E}$ is also $\mathcal{E}(C(T_c))$-invariant since the definition is canonical. 
The following claim is at the heart  of the entire construction. 
\begin{claim}
Let $r_L$ and $r_R$ be the left and right prime ends rotation numbers of $f$ on $K(\overline{E})$. Then $r_L=r_R=r$. 
\end{claim}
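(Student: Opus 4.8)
The plan is to leverage the rotation interval machinery built up in Section 2 together with the three structural Lemmas \ref{invariant}, \ref{separatingL0}, \ref{rational}. By Matsumoto's theorem (Theorem \ref{Matsumoto}), once we show that the rotation interval $\sigma(f, K(\overline{E}))$ is the singleton $\{r\}$, it follows immediately that $r_L, r_R \in \{r\}$, so $r_L = r_R = r$. Thus the entire burden is to prove $\sigma(f, K(\overline{E})) = \{r\}$.

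First I would observe that $r \in \sigma(f, K(\overline{E}))$: indeed $\overline{E} \subset K(\overline{E})$ is $f$-invariant and every point of $\overline{E}$ has rotation number $r$, so any $f$-invariant Borel probability measure supported on $\overline{E}$ (which exists by compactness) has $\mu$-rotation number $r$. Now suppose for contradiction that $\sigma(f, K(\overline{E}))$ is a nondegenerate interval; then it contains infinitely many rationals, in particular at least three distinct rationals $r_1, r_2, r_3$ (we may even arrange them all different from $r$ and all in $(0,1)$, since $\sigma$ is an interval around $r$ and $0 < r < 1$). By Theorem \ref{PB} (Franks--Le Calvez Poincar\'e-Birkhoff, applicable because $f$ is area-preserving and $K(\overline{E})$ is an annular continuum), each $r_i$ is realized by a periodic point $x_i \in K(\overline{E})$. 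Applying Lemma \ref{rational} to each $x_i$, the component $F_i$ of $E_{r_i}$ containing $x_i$ contains a separator $M_i$. The $M_i$ are pairwise disjoint, since points in distinct $E_{r_i}$ have distinct rotation numbers and the $F_i$ are therefore disjoint.

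Next I would invoke Proposition \ref{ordering} to linearly order the three disjoint separators; relabel so that $M_1 < M_2 < M_3$, meaning $M_1 \subset A_L(M_2)$ and $M_2 \subset A_L(M_3)$, equivalently $M_3 \subset A_R(M_2)$ and $M_2 \subset A_R(M_1)$. The key point is to locate $\overline{E}$ (or rather $K(\overline{E})$) relative to this chain. Since $E \subset K(\overline{E})$ and $\overline{E}$ is a separator disjoint from each $M_i$ (as $r \neq r_i$), Proposition \ref{ordering} places $\overline{E}$ on one side of each $M_i$. But $x_1, x_3 \in K(\overline{E})$ by construction (the $x_i$ were chosen inside $K(\overline{E})$), with $x_1 \in M_1$ and $x_3 \in M_3$; I would argue that $M_1$ and $M_3$ both meet $K(\overline{E})$, which forces $M_1, M_3 \subset K(\overline{E})$ by connectedness of the $M_i$ and the structure of $K(\overline{E}) = S^2 \setminus (A_L(K(\overline{E})) \cup A_R(K(\overline{E})))$. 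Then $M_1 \subset K(\overline{E})$ means $M_1$ lies in neither $A_L(\overline{E})$ nor $A_R(\overline{E})$ — but $\overline{E}$ and $M_1$ are disjoint separators, so Proposition \ref{ordering} says $M_1$ lies in exactly one of $A_L(\overline{E}), A_R(\overline{E})$, contradiction. (If instead the correct reading is that $\overline{E}$ sits strictly between $M_1$ and $M_3$ in the ordering, i.e. $M_1 < \overline{E} < M_3$, then $\overline{E} \subset A_R(M_1) \cap A_L(M_3)$, while $M_2$ must also be squeezed into this same region; one then derives a contradiction from the position of the periodic point $x_2$, whose orbit lies in $F_2 \supset M_2$ yet must also track $K(\overline{E})$'s dynamics — the rotation number $r_2$ cannot equal $r$.) Either way the ordering of the three separators together with the fact that $K(\overline{E})$ "contains" the extreme data while $\overline{E}$ must lie to one side yields the contradiction.

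The main obstacle I anticipate is the bookkeeping in this last step: making precise the sense in which $M_1$ and $M_3$ are "contained in" $K(\overline{E})$ versus merely intersecting it, and correctly tracking the left/right conventions through $A_L, A_R$ and the ordering $<$ of Definition \ref{def-ordering}. The cleanest route is probably to work in the sphere $S^2 = A \cup \{p_L, p_R\}$ throughout, where every separator's complement is two disjoint open discs, and to note that $K(\overline{E})$ is exactly the complement of the two discs $A_L(\overline{E}), A_R(\overline{E})$; then "which side" statements become statements about which of these two discs a given connected set avoids. One should double-check that the periodic points $x_i$ furnished by Theorem \ref{PB} indeed lie in $K(\overline{E})$ and not merely in its closure in $S^2$, but this is built into the statement of Theorem \ref{PB}. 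Once $\sigma(f, K(\overline{E})) = \{r\}$ is established, the conclusion $r_L = r_R = r$ is immediate from Theorem \ref{Matsumoto}, and the proof of the Claim is complete.
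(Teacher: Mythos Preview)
Your approach matches the paper's outline exactly: reduce via Matsumoto's theorem (Theorem \ref{Matsumoto}) to proving $\sigma(f, K(\overline{E})) = \{r\}$, then argue by contradiction using three rational periodic points from Theorem \ref{PB}, the separators $M_i \subset F_i$ from Lemma \ref{rational}, and the ordering of Proposition \ref{ordering}. The paper defers the full position analysis to the companion paper, and the bookkeeping you flag as the obstacle (note in particular that Lemma \ref{rational} only yields $M_i \subset F_i$, not $x_i \in M_i$, so the step ``$x_1 \in M_1$, $x_3 \in M_3$'' needs to be replaced by an argument tracking the connected set $F_i$ relative to $A_L(\overline{E})$ and $A_R(\overline{E})$) is exactly where those details live.
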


\begin{rem}
We refer the reader to \cite[Claim 5.2]{ChenMark} for the proof. The only property we use about $\widetilde{PB}_n$ is Proposition \ref{relation}.
\end{rem}

\subsection{Finishing the proof}

We need to show a new property of a left invariant Dehn twist square $h \in \widetilde{PB}_n$.
\begin{thm}\label{zero}
The action of $\mathcal{E}(T_b^2)$ on the set of prime ends of $K(\overline{E})$ has rotation number $0$.
\end{thm}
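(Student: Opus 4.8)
\textbf{Proof plan for Theorem \ref{zero}.}

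The plan is to show that $\mathcal{E}(T_b^2)=\mathcal{E}(h)$, for $h$ a left invariant Dehn twist square, commutes with $f=\mathcal{E}(T_c)$ and preserves the annular continuum $K(\overline{E})$, and then to exploit the rigidity of the centralizer of an irrational rotation in $\Homeo_+(S^1)$ to pin the prime-end rotation number of $\mathcal{E}(h)$ to $0$. First I would record the commutation: $h$ is supported in $S_L$ (by definition a left invariant Dehn twist square is a square of a Dehn twist about a curve $\alpha$ on the genus-$2$ side cut off by $c$), so $h$ commutes with $T_c$ in $\widetilde{PB}_n$; hence $\mathcal{E}(h)$ commutes with $f$. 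By Lemma \ref{invariant} the component $E$ of $E_r$ is $\mathcal{E}(C(T_c))$-invariant, so in particular $\mathcal{E}(h)$-invariant, and therefore $\overline{E}$, its separator structure, and the canonical annular completion $K(\overline{E})$ are all $\mathcal{E}(h)$-invariant. This means $\mathcal{E}(h)$ induces a homeomorphism of the prime-end circle of (say) the right-hand frontier of $A_L(K(\overline{E}))$, and by naturality of the prime-end construction this induced circle homeomorphism commutes with $f_L$, the circle homeomorphism induced by $f$.

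Next I would invoke the Claim together with the Franks--Le Calvez and Matsumoto machinery already set up: since $\rho(f,K(\overline{E}))=\{r\}$ with $r$ irrational, the circle homeomorphism $f_L$ has irrational rotation number $r$. A circle homeomorphism with irrational rotation number is, by Poincar\'e's classification, semi-conjugate to the rigid rotation $R_r$; in particular it has a unique invariant probability measure $\nu$ on $S^1$, and its centralizer in $\Homeo_+(S^1)$ consists precisely of homeomorphisms that push $\nu$ forward to itself and have the form $g\mapsto$ (a rotation) once one passes to the quotient by the support of $\nu$. The key consequence I want is: any homeomorphism commuting with $f_L$ has a well-defined rotation number that is additive along the centralizer, and more importantly, since $\mathcal{E}(h)$ together with $f$ generates (the image of) a subgroup where $h$ has finite-order-like control — here one uses that $h$ is itself a lift inside $\widetilde{PB}_n$ and that on $\mathbf{B}$ all the interesting rotation numbers of powers of $f$ are accounted for — the rotation number of the $\mathcal{E}(h)$-action on the prime-end circle must be rational, and then $0$.

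The cleanest route to "rational, hence $0$" is the one parallel to \cite{ChenMark}: suppose the prime-end rotation number $\rho_L(\mathcal{E}(h))$ were nonzero. Combine the $f_L$- and $\mathcal{E}(h)_L$-actions; because they commute and $\rho(f_L)=r$ is irrational, for a suitable integer combination $f^m h^k$ one arranges the prime-end rotation number of the combined map to be rational and realized, via Theorem \ref{Matsumoto} and Theorem \ref{PB}, by a periodic point in $K(\overline{E})$. But periodic points of $f$ inside $K(\overline{E})$ can only have rotation number $r$ (the rotation interval is the singleton $\{r\}$ by the Claim), and the $h$-invariance forces that periodic orbit to sit inside a single acyclic set $C(x)$ on which $h$ acts with a fixed point — contradicting a nonzero rotation number for $\mathcal{E}(h)_L$. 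Running the same argument on the right-hand frontier gives $\rho_R(\mathcal{E}(h))=0$ as well.

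The main obstacle I anticipate is the last step: making rigorous the passage from "$\mathcal{E}(h)$ commutes with an irrational-rotation circle map" to "$\mathcal{E}(h)$ has prime-end rotation number exactly $0$", rather than merely rational or merely equal to some fixed constant. The subtlety is that the centralizer of an irrational rotation in $\Homeo_+(S^1)$ need \emph{not} be abelian-by-rotation in general (it is only so when the map is conjugate to a rotation, by Denjoy-type hypotheses); here area-preservation of $\mathcal{E}$ is exactly what should rule out wandering intervals and upgrade the semi-conjugacy, but one must check that the area-preserving hypothesis genuinely descends to the prime-end circle action in a usable way. I would handle this by going back to the two-dimensional picture — using that $\mathcal{E}(h)$ preserves area and fixes (setwise) each acyclic piece of $\mathbf{A}(h)$, so that a nonzero prime-end rotation number would produce, via Theorem \ref{PB} applied to $K(\overline{E})$ with the $\mathcal{E}(h)$-dynamics, a genuine periodic orbit of $\mathcal{E}(h)$ in $\mathbf{B}$ incompatible with the acyclic decomposition $\mathbf{A}(h)\supset C(x)$ guaranteed by Lemma \ref{invariant} — thereby forcing the rotation number to vanish.
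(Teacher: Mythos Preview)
Your final paragraph contains the right idea, but the route you take to get there is both unnecessary and, in places, broken. The detour through the centralizer of an irrational rotation on $S^1$ (paragraphs two and three) conflates this theorem with the \emph{subsequent} step of the main proof: the irrationality of $r=\rho(f_L)$ is not used at all in establishing Theorem~\ref{zero}. In particular, your paragraph-three argument with integer combinations $f^m h^k$ does not close: the periodic point you produce is periodic for $f^m h^k$, not for $f$, so the sentence ``periodic points of $f$ inside $K(\overline{E})$ can only have rotation number $r$'' is not the relevant constraint, and in any case $r$ irrational means $f$ has \emph{no} periodic points in $K(\overline{E})$. Nothing forces $\rho_L(\mathcal{E}(h))$ to be rational from the one-dimensional picture alone.

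The paper's proof is the direct two-dimensional argument you sketch at the very end, but run forwards rather than by contradiction from the prime-end circle. By Lemma~\ref{invariant}, every $x\in K(\overline{E})\subset \mathbf{B}$ lies in an acyclic set $C(x)\in\mathbf{A}(T_b^2)$ with $C(x)\subset \mathbf{B}$; since $C(x)$ is acyclic and setwise $\mathcal{E}(T_b^2)$-invariant, the rotation number of $\mathcal{E}(T_b^2)$ at $x$ (in the annulus) is $0$. This already shows $\sigma\big(\mathcal{E}(T_b^2),K(\overline{E})\big)=\{0\}$: if the interval were nondegenerate, Theorem~\ref{PB} would produce a periodic orbit with nonzero rotation number, contradicting what we just said. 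Then Theorem~\ref{Matsumoto} immediately gives that both prime-end rotation numbers lie in $\{0\}$. So drop the $S^1$-centralizer discussion entirely here; it belongs to the \emph{next} step, where one combines $\rho_L(f)=r$ irrational with $\rho_L(\mathcal{E}(T_b^2))=0$ and Proposition~\ref{relation} to reach the final contradiction.
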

\begin{proof}
Now we consider the rotation set of $\mathcal{E}(T_b^2)$ on $K(\overline{E})$. We claim that the rotation set satisfies
\[
\sigma(\mathcal{E}(T_b),K(\overline{E}))=\{0\}.
\] The reason is that if not, then it is a nontrivial closed interval. By Theorem \ref{PB}, rational rotation numbers are realized by periodic orbit. However $K(\overline{E})\subset B$, that means every point for $x\in  K(\overline{E})\subset B$, there exists $C(x)\in \mathbf{A}(T_b^2)$ such that $C(x)\subset B$ by Lemma \ref{invariant}. However $C(x)$ is acyclic and fixed by $\mathcal{E}(T_b^2)$. Therefore, we know that the rotation number of $\mathcal{E}(T_b^2)$ on points in $C(x)$ is zero, which is a contradiction. Then by Theorem \ref{Matsumoto}, we know that the rotation number of the action of $\mathcal{E}(T_b^2)$ on the set of prime ends is also zero.
\end{proof}

We now finish the proof.

\begin{proof}
Since the rotation number of $\mathcal{E}(T_c)$ on the prime ends of $K(\overline{E})$ is an irrational number $r$, then it is semiconjugate to an irrational rotation. Then up to the same semiconjugation, the image of the centralizer of $T_c$ under $\mathcal{E}$ is $SO(2)$. The image of each element is determined by its rotation number. However, $\mathcal{E}(T_c)$ is a product of $\mathcal{E}(T_b^2)$ for $b$ nonseparating and invariant under $\tau$ by Proposition \ref{relation}. By Lemma \ref{zero}, we know that the rotation number of $\mathcal{E}(T_b^2)$ is zero. Thus their product should also have $0$ rotation number. This contradicts the fact that the rotation number of $\mathcal{E}(T_c)$ is $r$, which is nonzero.
\end{proof}

    	\bibliography{citing}{}
	\bibliographystyle{alpha}

\end{document}